\numberwithin{equation}{subsection}
\newcommand{\sqsp}{\renewcommand{\baselinestretch}{1.15}\tiny\normalsize}
\newtheorem{theorem}[subsection]{Theorem}
\newtheorem{lemma}[subsection]{Lemma}
\newtheorem{proposition}[subsection]{Proposition}
\newtheorem{corollary}[subsection]{Corollary}
\theoremstyle{definition}
\newtheorem{definition}[subsection]{Definition}
\newtheorem{example}[subsection]{Example}
\newcommand{\bk}{\mathbf{k}}
\newcommand{\muop}{\mu^{op}}
\newcommand{\mualpha}{\mu_\alpha}
\newcommand{\mubeta}{\mu_\beta}
\newcommand{\mun}{\mu^{(n)}}
\newcommand{\defn}{\buildrel \text{def} \over =}
\DeclareMathOperator{\Hom}{Hom}
\begin{document}

\title{Right Hom-alternative algebras}
\author{Donald Yau}

\begin{abstract}
It is shown that every multiplicative right Hom-alternative algebra is both Hom-power associative and Hom-Jordan admissible.  Multiplicative right Hom-alternative algebras admit Albert-type decompositions with respect to idempotents.  Multiplication operators defined by idempotents in right Hom-alternative algebras are studied.  Hom-versions of some well-known identities in right alternative algebras are proved.
\end{abstract}

\keywords{Right Hom-alternative algebra, Hom-power associative algebra, Hom-Jordan algebra.}

\subjclass[2000]{17A05, 17A20, 17D15}

\address{Department of Mathematics\\
    The Ohio State University at Newark\\
    1179 University Drive\\
    Newark, OH 43055, USA}
\email{dyau@math.ohio-state.edu}

\date{\today}
\maketitle

\sqsp

\section{Introduction}

An algebra that satisfies
\[
(xy)y = x(yy)
\]
is called a right alternative algebra.  If a right alternative algebra also satisfies the left alternative identity
\[
(xx)y = x(xy),
\]
then it is called an alternative algebra.  For example, the $8$-dimensional Cayley algebras are alternative algebras that are not associative  \cite{schafer}.  Alternative algebras are closely related to other classes of non-associative algebras.  In fact, alternative algebras are Jordan-admissible, Maltsev-admissible \cite{bk,maltsev}, and power associative \cite{albert1,albert2}.  Alternative algebras also satisfy the Moufang identities \cite{schafer}.

Generalizations of (left/right) alternative algebras, called (left/right) Hom-alternative algebras, were introduced by Makhlouf \cite{mak}.  These (left/right) Hom-alternative algebras are defined by relaxing the defining identities in (left/right) alternative algebras by a linear self-map, called the twisting map.  Construction results and examples of Hom-alternative algebras can be found in \cite{mak,yau4}.  Hom-type generalizations of classical algebras appeared in \cite{hls} in the form of Hom-Lie algebras, which were used to describe deformations of the Witt algebra and the Virasoro algebra.  Hom-associative algebras were defined in \cite{ms} and further studied in \cite{ms2,yau0,yau1}.  For other Hom-type algebras, see \cite{ms,ms2,yau2,yau3,yau4,yau15} and the references therein.

Many properties of alternative algebras have Hom-type generalizations.  Indeed, the author proved in \cite{yau4} that multiplicative Hom-alternative algebras are Hom-Jordan admissible and Hom-Maltsev admissible, and they satisfy Hom-versions of the Moufang identities.  Moreover, in \cite{yau15} the author showed that, as an immediate consequence of a much more general result, multiplicative right Hom-alternative algebras are Hom-power associative.

The purpose of this paper is to study right Hom-alternative algebras.  In section \ref{sec:ex} it is observed that the category of right Hom-alternative algebras is closed under twisting by self-weak morphisms (Theorem \ref{thm:twist}).  A construction result in \cite{mak} of multiplicative right Hom-alternative algebras is then recovered as a special case (Corollary \ref{cor2:twist}). In Example \ref{ex:5d} an infinite family of distinct isomorphism classes of multiplicative right Hom-alternative algebras are constructed.  Each of these right Hom-alternative algebras is neither left Hom-alternative nor right alternative.

In section \ref{sec:hompower} it is shown with a short proof that every multiplicative right Hom-alternative algebra is Hom-power associative (Theorem \ref{thm:power}).  In section \ref{sec:homjordan} it is shown that every multiplicative right Hom-alternative algebra is Hom-Jordan admissible (Theorem \ref{thm:jordan}).  Notice that in this result, left Hom-alternativity is not needed.

In section \ref{sec:decomp} the Hom-version of an idempotent is defined.  A generalization of Albert's decomposition is proved for multiplicative right Hom-alternative algebras  (Proposition \ref{prop:decomp}).  In section \ref{sec:idempotent} various multiplication operators on right Hom-alternative algebras induced by idempotents are studied.

In section \ref{sec:identity} the Hom-versions of some well-known identities, including a Moufang identity, in right alternative algebras are proved.

\section{Examples of right Hom-alternative algebras}
\label{sec:ex}

The purposes of this section are to prove some construction results for right Hom-alternative algebras and to provide some examples of right Hom-alternative algebras that are neither left Hom-alternative nor right alternative.

\subsection{Notations}

Throughout the rest of this paper, we work over a fixed field $\bk$ of characteristic $0$.  If $V$ is a $\bk$-module and $\mu \colon V^{\otimes 2} \to V$ is a bilinear map, then $\muop \colon V^{\otimes 2} \to V$ denotes the opposite map, i.e., $\muop = \mu\tau$, where $\tau \colon V^{\otimes 2} \to V^{\otimes 2}$ interchanges the two variables.  For $x$ and $y$ in $V$, we sometimes write $\mu(x,y)$ as $xy$.  For a linear self-map $\alpha \colon V \to V$, denote by $\alpha^n$ the $n$-fold composition of $n$ copies of $\alpha$, with $\alpha^0 \equiv Id$.

We now provide some basic definitions regarding Hom-algebras.

\begin{definition}
\label{def:homalg}
\begin{enumerate}
\item
A \textbf{Hom-module} is a pair $(A,\alpha)$ in which $A$ is a $\bk$-module and $\alpha \colon A \to A$ is a linear map, called the twisting map.  A \textbf{morphism} $f \colon (A,\alpha_A) \to (B,\alpha_B)$ of Hom-modules is a linear map of the underlying $\bk$-modules such that
\[
f\alpha_A = \alpha_Bf.
\]
\item
A \textbf{Hom-algebra} is a triple $(A,\mu,\alpha)$ in which $(A,\alpha)$ is a Hom-module and $\mu \colon A^{\otimes 2} \to A$ is a bilinear map, called the multiplication.  A Hom-algebra $(A,\mu,\alpha)$ and the corresponding Hom-module $(A,\alpha)$ are often abbreviated to $A$.
\item
A Hom-algebra $(A,\mu,\alpha)$ is \textbf{multiplicative} if $\alpha$ is multiplicative with respect to $\mu$, i.e.,
\[
\alpha\mu = \mu \alpha^{\otimes 2}.
\]
\item
Let $A$ and $B$ be Hom-algebras.  A \textbf{weak morphism} $f \colon A \to B$ of Hom-algebras is a linear map $f \colon A \to B$ such that
\[
f\mu_A = \mu_B f^{\otimes 2}.
\]
A \textbf{morphism} $f \colon A \to B$ is a weak morphism such that
\[
f\alpha_A = \alpha_Bf.
\]
\end{enumerate}
\end{definition}

From now on, an algebra $(A,\mu)$ is also regarded as a Hom-algebra $(A,\mu,Id)$ with identity twisting map.

Next we recall the Hom-type generalizations of (left/right) alternative and flexible algebras.

\begin{definition}
\label{def:homassociator}
Let $(A,\mu,\alpha)$ be a Hom-algebra.
\begin{enumerate}
\item
Define the \textbf{Hom-associator} \cite{ms} $as_A \colon A^{\otimes 3} \to A$ by
\begin{equation}
\label{homassociator}
as_A = \mu (\mu \otimes \alpha - \alpha \otimes \mu).
\end{equation}
\item
$A$ is called a \textbf{right Hom-alternative algebra} if
\begin{equation}
\label{rhomalt}
as_A(x,y,y) = 0
\end{equation}
for all $x,y \in A$.  $A$ is called a \textbf{left Hom-alternative algebra} if
\[
as_A(x,x,y) = 0
\]
for all $x,y \in A$.  $A$ is called a \textbf{Hom-alternative algebra} \cite{mak} if it is both left Hom-alternative and right Hom-alternative.
\item
$A$ is called a \textbf{Hom-flexible algebra} \cite{ms} if
\[
as_A(x,y,x) = 0
\]
for all $x,y \in A$.
\end{enumerate}
\end{definition}

In terms of elements $x,y,z \in A$, we have
\begin{equation}
\label{homass}
as_A(x,y,z) = (xy)\alpha(z) - \alpha(x)(yz)
\end{equation}
When there is no danger of confusion, we will omit the subscript $A$ in the Hom-associator.  When the twisting map $\alpha$ is the identity map, the above definitions reduce to the usual definitions of the associator, (left/right) alternative algebras, and flexible algebras.

It is well-known that alternative algebras are both Jordan-admissible and Maltsev-admissible \cite{bk,maltsev}.  Moreover, alternative algebras satisfy the Moufang identities \cite{schafer}.  Generalizations of these results to multiplicative Hom-alternative algebras are proved in \cite{yau4}.

The following basic result gives the linearized form of the right Hom-alternative identity.

\begin{lemma}[\cite{mak}]
\label{lem:linearized}
Let $(A,\mu,\alpha)$ be a Hom-algebra.  Then the following statements are equivalent.
\begin{enumerate}
\item
$A$ is right Hom-alternative.
\item
$A$ satisfies
\begin{equation}
\label{righthomalt}
as_A(x,y,z) = - as_A(x,z,y)
\end{equation}
for all $x,y,z \in A$.
\item
$A$ satisfies
\begin{equation}
\label{righthomalt'}
(xy)\alpha(z) + (xz)\alpha(y) = \alpha(x)(yz + zy)
\end{equation}
for all $x,y,z \in A$.
\end{enumerate}
\end{lemma}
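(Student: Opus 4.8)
The plan is to prove the cycle of implications $(1) \Rightarrow (2) \Rightarrow (3) \Rightarrow (1)$, keeping everything at the level of the Hom-associator identity \eqref{homass}.

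\medskip

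\textit{$(1) \Rightarrow (2)$.} Assume $A$ is right Hom-alternative, so $as_A(x,y,y) = 0$ for all $x,y$. The standard trick is polarization: replace $y$ by $y+z$ in the identity $as_A(x,y,y)=0$ and expand using the bilinearity of $\mu$ (hence of $as_A$ in each slot). This yields
\[
0 = as_A(x,y+z,y+z) = as_A(x,y,y) + as_A(x,y,z) + as_A(x,z,y) + as_A(x,z,z),
\]
and the first and last terms vanish by hypothesis, leaving $as_A(x,y,z) = -as_A(x,z,y)$, which is \eqref{righthomalt}. I should note that this step uses characteristic $0$ (or at least characteristic $\neq 2$), which is in force by the standing assumptions; no multiplicativity of $\alpha$ is needed.

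\medskip

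\textit{$(2) \Rightarrow (3)$.} Assume \eqref{righthomalt}. Write out $as_A(x,y,z) + as_A(x,z,y) = 0$ using the elementwise formula \eqref{homass}: this is
\[
\bigl[(xy)\alpha(z) - \alpha(x)(yz)\bigr] + \bigl[(xz)\alpha(y) - \alpha(x)(zy)\bigr] = 0,
\]
and rearranging gives $(xy)\alpha(z) + (xz)\alpha(y) = \alpha(x)(yz+zy)$, which is exactly \eqref{righthomalt'}.

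\medskip

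\textit{$(3) \Rightarrow (1)$.} Assume \eqref{righthomalt'}. Set $z = y$ in it: the left side becomes $2(xy)\alpha(y)$ and the right side becomes $\alpha(x)(2yy) = 2\alpha(x)(yy)$. Dividing by $2$ (again using characteristic $0$) gives $(xy)\alpha(y) = \alpha(x)(yy)$, i.e., $as_A(x,y,y) = 0$, so $A$ is right Hom-alternative.

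\medskip

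The calculations here are entirely routine; there is no real obstacle. The only point requiring a moment's care is the characteristic hypothesis in the polarization step $(1)\Rightarrow(2)$ and in the specialization $(3)\Rightarrow(1)$ — but since the paper works over a field of characteristic $0$ throughout, this is automatic. Everything is purely formal manipulation of the bilinear map $\mu$ and the linear map $\alpha$, and in particular the multiplicativity condition $\alpha\mu = \mu\alpha^{\otimes 2}$ plays no role.
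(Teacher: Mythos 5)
Your proof is correct and follows essentially the same route as the paper: polarize $as_A(x,y,y)=0$ by replacing $y$ with $y+z$ to get \eqref{righthomalt}, expand via \eqref{homass} to get \eqref{righthomalt'}, and specialize back (the paper sets $y=z$ in \eqref{righthomalt}, you set $z=y$ in \eqref{righthomalt'}; these are the same step in different clothing). One tiny quibble: the characteristic hypothesis is not actually needed in the polarization step $(1)\Rightarrow(2)$, since the cross terms appear without a factor of $2$; it is only the specialization $(3)\Rightarrow(1)$ that requires dividing by $2$.
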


\begin{proof}
The equivalence of the first two statements is part of \cite{mak} (Proposition 2.6).  Indeed, starting from the right Hom-alternative identity \eqref{rhomalt}, one replaces $y$ by $y + z$ to obtain \eqref{righthomalt}.  Conversely, starting from \eqref{righthomalt}, one sets $y = z$ to obtain the right Hom-alternative identity.  The identity \eqref{righthomalt'} is the expansion of \eqref{righthomalt} using \eqref{homass}.
\end{proof}

A Hom-alternative algebra is right Hom-alternative by definition.  The following observation gives a necessary and sufficient condition under which the converse is true.  This observation is a slight extension of \cite{mak} (Proposition 2.10) and is the Hom-version of \cite{albert3} (Lemma 2).

\begin{lemma}
\label{lem:lefthomalt}
A right Hom-alternative algebra is Hom-alternative if and only if it is Hom-flexible.
\end{lemma}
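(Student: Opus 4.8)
The plan is to prove the two implications separately, using the linearized form of right Hom-alternativity from Lemma \ref{lem:linearized} together with the three ``partially symmetric'' identities $as(x,y,y)=0$, $as(x,y,x)=0$, and (the goal) $as(x,x,y)=0$. Since a Hom-alternative algebra is by definition both left and right Hom-alternative, the forward direction amounts to showing that Hom-alternativity implies Hom-flexibility; the reverse direction is the substantive one: assuming right Hom-alternativity and Hom-flexibility, deduce left Hom-alternativity.

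For the forward direction, I would work with the Hom-associator as a function of three variables and exploit that it is alternating under the relevant transpositions. By Lemma \ref{lem:linearized}(2), right Hom-alternativity gives $as(x,y,z) = -as(x,z,y)$, i.e.\ antisymmetry in the last two slots. In a left Hom-alternative algebra one similarly gets antisymmetry in the first two slots, $as(x,y,z) = -as(y,x,z)$. An alternating trilinear map (antisymmetric in adjacent pairs) is antisymmetric under every transposition, in particular under the swap of the outer two slots, so $as(x,y,x) = -as(x,y,x) = 0$, which is exactly Hom-flexibility. I expect this direction to be essentially immediate once the antisymmetry bookkeeping is set up.

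For the reverse direction, I would start from the fully linearized right Hom-alternative identity \eqref{righthomalt'},
\[
(xy)\alpha(z) + (xz)\alpha(y) = \alpha(x)(yz+zy),
\]
and the Hom-flexibility identity $as(x,y,x) = 0$, i.e.\ $(xy)\alpha(x) = \alpha(x)(yx)$; linearizing the latter in $x$ (replace $x$ by $x+w$) yields $(xy)\alpha(w) + (wy)\alpha(x) = \alpha(x)(yw) + \alpha(w)(yx)$. The aim is to combine these to produce $as(x,x,y) = (xx)\alpha(y) - \alpha(x)(xy) = 0$, equivalently the linearized form $as(x,w,y) + as(w,x,y) = 0$. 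I would take the linearized flexibility identity and the linearized right-alternative identity \eqref{righthomalt'}, permute the roles of the variables, and add or subtract to isolate the combination $\alpha(x)(xy) + \alpha(x)(yx)$ versus $(xx)\alpha(y)$; the term $\alpha(x)(yx)$ should be eliminated using right Hom-alternativity (setting $z=y$ in \eqref{righthomalt'} gives $2(xy)\alpha(y) = \alpha(x)(yy)+\alpha(x)(yy)$ type relations, or more usefully the symmetry $as(x,y,z)=-as(x,z,y)$), and the whole computation should close because we are in characteristic $0$ and may divide by $2$.

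The main obstacle I anticipate is purely combinatorial: assembling the right linear combination of the several permuted instances of \eqref{righthomalt'} and the linearized flexibility identity so that everything except $as(x,x,y)$ cancels. Unlike the associative case, one must carry the twisting map $\alpha$ through every term and be careful that the partially symmetric identities only license swaps in specific slot pairs, so there is no shortcut via full symmetry of $as$. I expect the key move to be writing down $as(x,y,z) + as(z,y,x)$ (the ``flexibility-type'' combination) and $as(x,y,z) + as(x,z,y)$ (which vanishes by right alternativity) and relating both to $as(x,y,z)+as(y,x,z)$; tracking that bookkeeping carefully is where the real work lies, but no step should require more than elementary manipulation of \eqref{homass} and \eqref{righthomalt'}.
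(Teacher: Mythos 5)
Your proposal is essentially correct, but it takes a longer route than the paper and leaves its harder half as a sketch. The paper's proof handles both directions at once with a single substitution: putting $y=x$ in the linearized identity \eqref{righthomalt} gives $as(x,x,z) = -as(x,z,x)$, so in a right Hom-alternative algebra the left Hom-alternative identity and the Hom-flexible identity are literally the same condition --- no case split, no linearization of flexibility, and no division by $2$ is needed. Your forward direction (antisymmetry of $as$ in the last two slots from right Hom-alternativity, in the first two slots from linearized left Hom-alternativity, hence antisymmetry under the outer swap and $2\,as(x,y,x)=0$) is the classical $S_3$-sign argument and is fine in characteristic $0$. For your reverse direction, the ``combinatorial obstacle'' you anticipate is not real work: with the linearized flexibility identity you wrote down (antisymmetry under the outer swap) and \eqref{righthomalt}, the chain $as(y,x,z) = -as(y,z,x) = as(x,z,y) = -as(x,y,z)$ (right alternativity, flexibility, right alternativity) is exactly the linearized left Hom-alternative identity, and setting $y=x$ and dividing by $2$ finishes; even more simply, you can skip linearizing flexibility altogether, since $as(x,x,z) = -as(x,z,x) = 0$ directly. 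What your route buys is a clean picture of the three identities as the three transposition antisymmetries of the Hom-associator; what the paper's substitution buys is a one-line proof in which both implications are immediate and the twisting map $\alpha$ never needs to be tracked through any linear combination.
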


\begin{proof}
Let $(A,\mu,\alpha)$ be a right Hom-alternative algebra. Setting $y = x$ in \eqref{righthomalt}, we have
\[
as(x,x,z) = -as(x,z,x)
\]
for all $x,z \in A$.  Therefore, in a right Hom-alternative algebra, the left Hom-alternative identity is equivalent to the Hom-flexible identity.
\end{proof}

Let us now observe that the category (with weak morphisms) of right Hom-alternative algebras is closed under twisting by self-weak morphisms.

\begin{theorem}
\label{thm:twist}
Let $(A,\mu,\alpha)$ be a right Hom-alternative algebra, and let $\beta \colon A \to A$ be a weak morphism.  Then the Hom-algebra
\begin{equation}
\label{abeta}
A_\beta = (A,\mubeta = \beta \mu, \beta\alpha)
\end{equation}
is also a right Hom-alternative algebra.  Moreover, if $A$ is multiplicative and $\beta$ is a morphism, then $A_\beta$ is multiplicative.
\end{theorem}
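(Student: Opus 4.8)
The plan is to verify the right Hom-alternative identity $as_{A_\beta}(x,y,y) = 0$ directly by expanding the Hom-associator of $A_\beta$ in terms of the data of $A$. First I would compute, using \eqref{homass} applied to the multiplication $\mu_\beta = \beta\mu$ and twisting map $\beta\alpha$, that
\[
as_{A_\beta}(x,y,z) = \mu_\beta\bigl(\mu_\beta(x,y), (\beta\alpha)(z)\bigr) - \mu_\beta\bigl((\beta\alpha)(x), \mu_\beta(y,z)\bigr).
\]
The key observation is that $\beta$ is a weak morphism, so $\beta\mu = \mu\beta^{\otimes 2}$; this lets me pull every instance of $\beta$ outward. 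Concretely, $\mu_\beta(\mu_\beta(x,y),(\beta\alpha)(z)) = \beta\mu(\beta\mu(x,y),\beta\alpha(z)) = \beta\mu(\mu(\beta x,\beta y),\beta(\alpha z)) = \beta\mu(\beta^{\otimes 2}(\mu(\beta x,\beta y),\alpha(z)))$, and one more application of the weak-morphism property on the outer $\mu$ turns this into $\beta^2\mu(\mu(\beta x,\beta y),\alpha(\beta z))$. Here I am using that $\beta$ commutes with nothing a priori except through the weak-morphism identity, so I must be careful to route every $\beta$ through an application of $\beta\mu = \mu\beta^{\otimes 2}$ rather than assuming $\beta\alpha = \alpha\beta$.

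Carrying out the same manipulation on the second term, I expect to arrive at
\[
as_{A_\beta}(x,y,z) = \beta^2\Bigl(\mu\bigl(\mu(\beta x,\beta y),\alpha(\beta z)\bigr) - \mu\bigl(\alpha(\beta x),\mu(\beta y,\beta z)\bigr)\Bigr) = \beta^2\, as_A(\beta x,\beta y,\beta z).
\]
Once this formula is established, setting $z = y$ gives $as_{A_\beta}(x,y,y) = \beta^2\, as_A(\beta x,\beta y,\beta y) = \beta^2(0) = 0$, since $A$ is right Hom-alternative. This proves that $A_\beta$ is right Hom-alternative.

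For the multiplicativity claim, suppose $A$ is multiplicative (so $\alpha\mu = \mu\alpha^{\otimes 2}$) and $\beta$ is a morphism (so in addition $\beta\alpha = \alpha\beta$). I need $(\beta\alpha)\mu_\beta = \mu_\beta(\beta\alpha)^{\otimes 2}$. Starting from the left side, $(\beta\alpha)\mu_\beta = \beta\alpha\beta\mu = \beta\alpha\mu\beta^{\otimes 2}$ (weak morphism) $= \beta\mu\alpha^{\otimes 2}\beta^{\otimes 2}$ (multiplicativity of $A$) $= \beta\mu(\alpha\beta)^{\otimes 2} = \beta\mu(\beta\alpha)^{\otimes 2} = \mu_\beta(\beta\alpha)^{\otimes 2}$, using $\beta\alpha = \alpha\beta$. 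This is a short chain of identities. The main obstacle, if any, is purely bookkeeping: keeping track of the order in which $\beta$, $\alpha$, and the tensor powers $\beta^{\otimes 2}$, $\alpha^{\otimes 2}$ are applied, and not accidentally commuting $\beta$ past $\alpha$ in the first part (where $\beta$ is only assumed to be a weak morphism). No conceptual difficulty is expected; this is the standard "twisting principle" argument adapted to the right Hom-alternative identity.
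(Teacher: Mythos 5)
Your overall strategy matches the paper's (expand the Hom-associator of $A_\beta$ and use the weak-morphism identity $\beta\mu = \mu\beta^{\otimes 2}$ to reduce to right Hom-alternativity of $A$), and your multiplicativity argument is correct and essentially identical to the paper's. However, the key intermediate identity you derive in the first part is wrong. When you pass from $\beta\mu\bigl(\mu(\beta x,\beta y),\beta\alpha(z)\bigr)$ to $\beta\mu\bigl(\beta^{\otimes 2}(\mu(\beta x,\beta y),\alpha(z))\bigr)$, you factor a $\beta$ out of the first slot that is no longer there (you already absorbed it when rewriting $\beta\mu(x,y)$ as $\mu(\beta x,\beta y)$), and in the next step you silently replace $\beta\alpha(z)$ by $\alpha(\beta z)$, i.e.\ you commute $\beta$ past $\alpha$ --- the very move you said you would avoid. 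The net effect is a double counting of $\beta$. The correct computation gives
\[
as_{A_\beta}(x,y,z) \;=\; \beta^2\bigl((xy)\alpha(z)-\alpha(x)(yz)\bigr) \;=\; \beta^2\, as_A(x,y,z),
\]
that is, $as_{A_\beta} = \beta^2 as_A$ with \emph{no} $\beta$ applied to the arguments, whereas your formula $as_{A_\beta}(x,y,z)=\beta^2\, as_A(\beta x,\beta y,\beta z)$ is false in general. Concretely, in Example \ref{ex:5d} (where $\alpha = Id$ and $\beta = \alpha_{\gamma,\delta,\epsilon}$ is even a morphism) one has $as_{A_\beta}(e,e,u)=\delta^2 v$, while $\beta^2\, as_A(\beta e,\beta e,\beta u)=\delta^3 v$, and these differ since $\delta\neq 0,1$.

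That said, this is a computational slip rather than a conceptual gap: once the formula is corrected to $as_{A_\beta}=\beta^2 as_A$ (obtained by pulling the two inner $\beta$'s out through a single application of $\beta\mu=\mu\beta^{\otimes 2}$ in each term, exactly as the paper does), setting $z=y$ gives $as_{A_\beta}(x,y,y)=\beta^2 as_A(x,y,y)=0$, and the first assertion follows; your (false) formula would also have implied the conclusion, which is why the error is easy to overlook, but as written the central step does not hold. The second half of your argument, $(\beta\alpha)\mubeta=\beta\alpha\mu\beta^{\otimes 2}=\beta\mu\alpha^{\otimes 2}\beta^{\otimes 2}=\mubeta(\alpha\beta)^{\otimes 2}=\mubeta(\beta\alpha)^{\otimes 2}$, is precisely the paper's proof of multiplicativity and is fine.
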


\begin{proof}
In fact, given \emph{any} Hom-algebra $A$ and a weak morphism $\beta \colon A \to A$, we have
\[
\begin{split}
\beta^2 as_A
&= (\beta\mu) (\beta\mu \otimes \beta\alpha - \beta\alpha \otimes \beta\mu)\\
&= as_{A_\beta}.
\end{split}
\]
This implies that, if $A$ is right Hom-alternative, then so is $A_\beta$ .  For the second assertion, assume that $A$ is multiplicative and that $\beta$ is a morphism.  Then we have
\[
\begin{split}
(\beta\alpha)\mubeta &= \beta\alpha\beta\mu\\
&= \beta \mu \alpha^{\otimes 2} \beta^{\otimes 2}\\
&= \mubeta (\alpha\beta)^{\otimes 2}\\
&= \mubeta (\beta\alpha)^{\otimes 2}.
\end{split}
\]
This shows that $A_\beta$ is multiplicative.
\end{proof}

Two special cases of Theorem \ref{thm:twist} follow.  The following result says that each multiplicative right Hom-alternative algebra gives rise to a derived sequence of multiplicative right Hom-alternative algebras.

\begin{corollary}
\label{cor1:twist}
Let $(A,\mu,\alpha)$ be a multiplicative right Hom-alternative algebra.  Then
\[
A^n = (A,\mun = \alpha^{n}\mu,\alpha^{n+1})
\]
is a multiplicative right Hom-alternative algebra for each $n \geq 0$.
\end{corollary}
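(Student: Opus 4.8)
The plan is to deduce Corollary \ref{cor1:twist} directly from Theorem \ref{thm:twist} by iterating the twisting construction with the twisting map $\alpha$ itself in the role of $\beta$. Since $(A,\mu,\alpha)$ is multiplicative, the identity $\alpha\mu = \mu\alpha^{\otimes 2}$ says precisely that $\alpha \colon A \to A$ is a weak morphism, and the compatibility $\alpha\alpha = \alpha\alpha$ is trivial, so $\alpha$ is in fact a morphism from $A$ to itself. Thus Theorem \ref{thm:twist} applies with $\beta = \alpha$ and yields that $A_\alpha = (A, \alpha\mu, \alpha^2)$ is a multiplicative right Hom-alternative algebra; this is exactly $A^1$ in the notation of the corollary.

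The remaining work is bookkeeping: I would set up an induction on $n \geq 0$. The base case $n = 0$ is the hypothesis $A^0 = (A, \mu, \alpha) = A$ itself. For the inductive step, assume $A^n = (A, \mun = \alpha^n\mu, \alpha^{n+1})$ is a multiplicative right Hom-alternative algebra. Its twisting map $\alpha^{n+1}$ is a morphism of $A^n$ into itself — multiplicativity of $A^n$ gives $\alpha^{n+1}\mun = \mun(\alpha^{n+1})^{\otimes 2}$, the weak morphism condition, and commutativity of $\alpha^{n+1}$ with itself is automatic. Applying Theorem \ref{thm:twist} to $A^n$ with $\beta = \alpha^{n+1}$ produces $(A^n)_{\alpha^{n+1}} = (A, \alpha^{n+1}\mun, \alpha^{n+1}\alpha^{n+1})$, which is a multiplicative right Hom-alternative algebra. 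Finally I would observe that $\alpha^{n+1}\mun = \alpha^{n+1}\alpha^n\mu = \alpha^{n+2}\mu = \mu^{(n+1)}$ and $\alpha^{n+1}\alpha^{n+1} = \alpha^{2n+2}$; wait — this gives $\alpha^{2n+2}$, not $\alpha^{n+2}$, so iterating $A \mapsto A_\alpha$ is \emph{not} the same as the stated derived sequence.

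So the cleaner route, and the one I would actually carry out, is to apply Theorem \ref{thm:twist} a single time with $\beta = \alpha^n$ rather than iterating. One checks $\alpha^n$ is a morphism of $A$ into itself: it commutes with $\alpha$, and multiplicativity of $A$ gives by an easy induction on $n$ that $\alpha^n\mu = \mu(\alpha^n)^{\otimes 2}$, so $\alpha^n$ is a weak morphism, hence a morphism. Theorem \ref{thm:twist} then gives that $A_{\alpha^n} = (A, \alpha^n\mu, \alpha^n\alpha) = (A, \mun, \alpha^{n+1})$ is a multiplicative right Hom-alternative algebra, which is exactly the claim. The only mild subtlety — the ``main obstacle,'' such as it is — is the preliminary lemma that multiplicativity of $\alpha$ propagates to multiplicativity of every power $\alpha^n$, i.e. $\alpha^n\mu = \mu(\alpha^n)^{\otimes 2}$; this follows by induction: $\alpha^{n+1}\mu = \alpha(\alpha^n\mu) = \alpha\mu(\alpha^n)^{\otimes 2} = \mu\alpha^{\otimes 2}(\alpha^n)^{\otimes 2} = \mu(\alpha^{n+1})^{\otimes 2}$.
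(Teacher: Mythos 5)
Your final argument is correct and is exactly the paper's proof: apply Theorem \ref{thm:twist} once with $\beta = \alpha^n$, after noting that multiplicativity of $A$ makes $\alpha^n$ a morphism (the paper states this without writing out the easy induction you supply). Your side observation that iterating $A \mapsto A_\alpha$ does \emph{not} reproduce the derived sequence is accurate and is precisely why the single-twist route is the right one.
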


\begin{proof}
The multiplicativity of $A$ implies that $\alpha^n \colon A \to A$ is a morphism.  By Theorem \ref{thm:twist} $A_{\alpha^n} = A^n$ is a multiplicative right Hom-alternative algebra.
\end{proof}

The following special case of Theorem \ref{thm:twist} says that multiplicative right Hom-alternative algebras may arise from right alternative algebras and their morphisms.  A twisting construction result of this form was first given by the author in \cite{yau1} for $G$-Hom-associative algebras, which include Hom-associative and Hom-Lie algebras.  The following adaptation to right Hom-alternative algebras appeared in \cite{mak} (Theorem 3.1).

\begin{corollary}
\label{cor2:twist}
Let $(A,\mu)$ be a right alternative algebra and $\beta \colon A \to A$ be an algebra morphism.  Then
\[
A_\beta = (A,\mubeta = \beta\mu,\beta)
\]
is a multiplicative right Hom-alternative algebra.
\end{corollary}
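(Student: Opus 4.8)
The plan is to derive Corollary \ref{cor2:twist} directly from Theorem \ref{thm:twist} by viewing the right alternative algebra $(A,\mu)$ as the Hom-algebra $(A,\mu,Id)$ with identity twisting map, following the convention adopted just before Definition \ref{def:homassociator}. With the identity twisting map, the right Hom-alternative identity \eqref{rhomalt} becomes the ordinary right alternative identity $(xy)y = x(yy)$, so $(A,\mu,Id)$ is indeed a right Hom-alternative algebra. It is also multiplicative, since $Id \circ \mu = \mu = \mu \circ Id^{\otimes 2}$.

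First I would observe that an algebra morphism $\beta \colon (A,\mu) \to (A,\mu)$ is precisely a morphism of Hom-algebras $\beta \colon (A,\mu,Id) \to (A,\mu,Id)$: the weak morphism condition $\beta\mu = \mu\beta^{\otimes 2}$ is exactly the statement that $\beta$ preserves the product, and the extra compatibility $\beta \circ Id = Id \circ \beta$ is automatic. Hence $\beta$ is a (genuine, not merely weak) morphism of the Hom-algebra $(A,\mu,Id)$.

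Next I would apply Theorem \ref{thm:twist} to $(A,\mu,Id)$ and the morphism $\beta$. The theorem produces the Hom-algebra
\[
(A,\mu,Id)_\beta = (A,\ \beta\mu,\ \beta \circ Id) = (A,\ \beta\mu,\ \beta),
\]
which is a right Hom-alternative algebra; this is exactly the $A_\beta$ in the statement. Moreover, since $(A,\mu,Id)$ is multiplicative and $\beta$ is a morphism, the second assertion of Theorem \ref{thm:twist} gives that $A_\beta$ is multiplicative. This completes the proof.

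There is essentially no obstacle here: the work is entirely bookkeeping, namely checking that the identity-twisting-map convention turns the hypotheses of Corollary \ref{cor2:twist} into the hypotheses of Theorem \ref{thm:twist}. The only point requiring a moment's care is confirming that an ordinary algebra morphism qualifies as a morphism (rather than just a weak morphism) in the Hom-algebra sense, so that the multiplicativity half of Theorem \ref{thm:twist} applies; but this is immediate because the twisting map is the identity.
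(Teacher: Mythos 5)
Your proposal is correct and follows exactly the paper's argument, which simply cites the $\alpha = Id$ special case of Theorem \ref{thm:twist}; your write-up just spells out the routine bookkeeping (that an algebra morphism is a Hom-algebra morphism for the identity twisting map) that the paper leaves implicit.
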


\begin{proof}
This is the $\alpha = Id$ case of Theorem \ref{thm:twist}.
\end{proof}

Using Corollary \ref{cor2:twist} we now construct an infinite family of distinct isomorphism classes of multiplicative right Hom-alternative algebras that are neither left Hom-alternative nor right alternative.

\begin{example}
\label{ex:5d}
In \cite{albert3} (p.\ 320-321) Albert constructed a five-dimensional right alternative algebra $(A,\mu)$ that is not left alternative.  In terms of a basis $\{e,u,v,w,z\}$ of $A$, its multiplication $\mu$ is given by
\[
e^2 = e,\quad eu = v,\quad ue = u,\quad
ew = w - z,\quad ez = z = ze,
\]
where the unspecified products of the basis elements are all $0$.

Let $\gamma, \delta, \epsilon \in \bk$ be arbitrary scalars with $\delta \not= 0,1$.  Consider the linear map $\alpha_{\gamma, \delta, \epsilon} = \alpha \colon A \to A$ given by
\begin{equation}
\label{5dalpha}
\alpha(e) = e + \epsilon u + \epsilon v,\quad \alpha(u) = \delta u,\quad
\alpha(v) = \delta v,\quad \alpha(w) = \gamma w,\quad \alpha(z) = \gamma z.
\end{equation}
We claim that $\alpha$ is an algebra morphism on $A$.  Indeed, suppose
\[
x = \lambda_1 e + \lambda_2 u + \lambda_3 v + \lambda_4 w + \lambda_5 z,\quad
y = \theta_1 e + \theta_2 u + \theta_3 v + \theta_4 w + \theta_5 z
\]
are two arbitrary elements in $A$ with all $\lambda_i, \theta_j \in \bk$.  Then
\[
\begin{split}
xy &= \lambda_1\theta_1 e + \lambda_2\theta_1 u + \lambda_1\theta_2 v + \lambda_1\theta_4 w + (\lambda_1(\theta_5 - \theta_4) + \lambda_5\theta_1)z,\\
\alpha(x) &= \lambda_1 e + (\lambda_1\epsilon + \lambda_2\delta)u + (\lambda_1\epsilon + \lambda_3\delta)v + \lambda_4\gamma w + \lambda_5\gamma z,
\end{split}
\]
and similarly for $\alpha(y)$.  A quick computation then shows that
\[
\begin{split}
\alpha(xy)
&= \lambda_1\theta_1 e + \theta_1(\lambda_1 \epsilon + \lambda_2\delta)u + \lambda_1(\theta_1\epsilon + \theta_2\delta)v\\
&\relphantom{} + \lambda_1\theta_4\gamma w + \gamma(\lambda_1(\theta_5 - \theta_4) + \lambda_5\theta_1)z\\
&= \alpha(x)\alpha(y).
\end{split}
\]
By Corollary \ref{cor2:twist} there is a multiplicative right Hom-alternative algebra
\begin{equation}
\label{5daalpha}
A_\alpha = (A,\mualpha = \alpha\mu,\alpha).
\end{equation}
We now prove the following statements.
\begin{enumerate}
\item
$A_\alpha$ in \eqref{5daalpha} is not left Hom-alternative.
\item
$(A,\mualpha)$ is not right alternative.
\item
If $\alpha' = \alpha_{\gamma', \delta', \epsilon'} \colon A \to A$ corresponds to the scalars $\gamma', \delta', \epsilon'$ such that $\delta \not\in \{\delta',\gamma'\}$, then $A_\alpha$ and $A_{\alpha'}$ are not isomorphic as Hom-modules (and hence as Hom-algebras).
\end{enumerate}
To see that $A_\alpha$ is not left Hom-alternative, observe that
\[
\begin{split}
as_{A_\alpha}(e,e,u) &= \alpha^2 as_A(e,e,u)\\
&= \alpha^2((ee)u - e(eu))\\
&= \alpha^2(v)\\
&= \delta^2v,
\end{split}
\]
which is not $0$ because $\delta \not= 0$.  To see that $(A,\mualpha)$ is not right alternative, observe that
\[
\begin{split}
\mualpha(\mualpha(u,e),e) - \mualpha(u,\mualpha(e,e))
&= \alpha\mu(\delta u,e) - \alpha\mu(u, e + \epsilon u + \epsilon v)\\
&= (\delta^2 - \delta)u,
\end{split}
\]
which is not $0$ because $\delta \not= 0,1$.

To prove the last statement, assume to the contrary that there is a Hom-module isomorphism $f \colon A_\alpha \to A_{\alpha'}$.  In particular, we have
\[
f(u) = \lambda_1 e + \lambda_2 u + \lambda_3 v + \lambda_4 w + \lambda_5 z
\]
for some scalars $\lambda_i$, not all of which are $0$.  So we have
\begin{equation}
\label{alphaf}
\alpha'f(u) = \lambda_1 (e + \epsilon' u + \epsilon' v) + \delta'(\lambda_2 u + \lambda_3 v) + \gamma'(\lambda_4 w + \lambda_5 z).
\end{equation}
On the other hand, we have
\begin{equation}
\label{falpha}
f\alpha(u) = f(\delta u) = \delta(\lambda_1 e + \lambda_2 u + \lambda_3 v + \lambda_4 w + \lambda_5 z).
\end{equation}
Since $\alpha'f = f\alpha$ and $\delta \not= 1$, by comparing the coefficients of $e$ in \eqref{alphaf} and \eqref{falpha}, we infer that
\[
\lambda_1 = 0.
\]
Using this in \eqref{alphaf} and \eqref{falpha} and the assumption $\delta \not\in \{\delta', \gamma'\}$, we further infer that
\[
\lambda_i = 0
\]
for $2 \leq i \leq 5$.  We conclude that
\[
f(u) = 0,
\]
contradicting the assumption that $f$ is a linear isomorphism.  Therefore, $A_\alpha$ and $A_{\alpha'}$ are not isomorphic as Hom-modules.
\qed
\end{example}

\section{Hom-power associativity}
\label{sec:hompower}

The purpose of this section is to show that every multiplicative right Hom-alternative algebra is Hom-power associative.  A more general result was proved in \cite{yau15}.  The point of the following proof is that, at least for multiplicative right Hom-alternative algebras, Hom-power associativity can be established by a short and direct proof.

Let us first recall the relevant definitions from \cite{yau15}.

\begin{definition}
\label{def:hompower}
Let $(A,\mu,\alpha)$ be a Hom-algebra, $x \in A$, and $n$ be a positive integer.
\begin{enumerate}
\item
Define the \textbf{$n$th Hom-power} $x^n \in A$ inductively by
\begin{equation}
\label{hompower}
x^1 = x, \qquad
x^n = x^{n-1}\alpha^{n-2}(x)
\end{equation}
for $n \geq 2$.
\item
For positive integers $i$ and $j$, define
\begin{equation}
\label{xij}
x^{i,j} = \alpha^{j-1}(x^i) \alpha^{i-1}(x^j).
\end{equation}
\item
We say that $A$ is \textbf{$n$th Hom-power associative} if
\begin{equation}
\label{nhpa}
x^n = x^{n-i,i}
\end{equation}
for all $x \in A$ and $i \in \{1,\ldots, n-1\}$.
\item
We say that $A$ is \textbf{Hom-power associative} if $A$ is $n$th Hom-power associative for all $n \geq 2$.
\end{enumerate}
\end{definition}

Note that by definition
\[
x^n = x^{n-1,1}
\]
for all $n \geq 2$.

If the twisting map $\alpha$ is the identity map, then
\[
x^n = x^{n-1}x, \quad x^{i,j} = x^ix^j,
\]
and $n$th Hom-power associativity reduces to
\begin{equation}
\label{npa}
x^n = x^{n-i}x^i
\end{equation}
for all $x \in A$ and $i \in \{1,\ldots, n-1\}$.  Therefore,
Hom-powers and ($n$th) Hom-power associativity become Albert's right powers and ($n$th) power associativity \cite{albert1,albert2} if $\alpha = Id$.  Examples of and construction results for Hom-power associative algebras can be found in \cite{yau15}.

A well-known result of Albert \cite{albert1} says that (in characteristic $0$, which is assumed throughout this paper) an algebra $(A,\mu)$ is power associative if and only if it is third and fourth power associative, i.e., the condition \eqref{npa} holds for $n = 3$ and $4$.  Moreover, for \eqref{npa} to hold for $n = 3$ and $4$, it is necessary and sufficient that
\[
(xx)x = x(xx) \quad\text{and}\quad ((xx)x)x = (xx)(xx)
\]
for all $x \in A$.  This result of Albert is remarkable because it shows that power associativity, which has infinitely many defining identities (namely, \eqref{npa} for all $n$), is implied by just two identities.  The Hom-versions of these statements are also true.  More precisely, the author proved in \cite{yau15} that a multiplicative Hom-algebra $(A,\mu,\alpha)$ is Hom-power associative if and only if it is third and fourth Hom-power associative, which in turn is equivalent to
\begin{equation}
\label{34}
x^2\alpha(x) = \alpha(x)x^2 \quad\text{and}\quad
x^4 = \alpha(x^2)\alpha(x^2)
\end{equation}
for all $x \in A$.

It is easy to verify that the identities in \eqref{34} hold in every multiplicative right Hom-alternative algebra (\cite{yau15} Proposition 3.4).  It follows that multiplicative right Hom-alternative algebras are Hom-power associative.  We now give a more direct proof of this result, which is modeled after \cite{albert3} (Lemma 1), without relying on the results from \cite{yau15}.

\begin{theorem}
\label{thm:power}
Every multiplicative right Hom-alternative algebra is Hom-power associative.
\end{theorem}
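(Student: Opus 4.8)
The plan is to prove Hom-power associativity by induction on $n$, following the structure of Albert's argument (\cite{albert3}, Lemma 1) but carrying the twisting maps along carefully. By Definition \ref{def:hompower}, it suffices to show that for every $x \in A$ we have $x^n = x^{n-i,i}$ for all $1 \le i \le n-1$; since $x^n = x^{n-1,1}$ holds by definition, the base cases $n = 2, 3$ are either trivial or covered by the right Hom-alternative identity \eqref{rhomalt} (with the one nontrivial case $x^3 = x^{2,1}$ being definitional and $x^{1,2}$ reducing to it via \eqref{rhomalt} applied to $as_A(x,x,x)$, using flexibility-type consequences). The key inductive step is to express $x^{n-i,i}$ for general $i$ in terms of $x^n$, and the main tool will be the linearized identity \eqref{righthomalt'}, specialized by setting two or three of its arguments to Hom-powers of a single element $x$.

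First I would fix $x$ and establish, as a lemma, the \emph{multiplicativity consequence} $\alpha(x^m) = (\alpha x)^m$ for all $m$, which follows by an easy induction from $\alpha\mu = \mu\alpha^{\otimes 2}$ and the recursion \eqref{hompower}; this lets me freely move $\alpha$ past Hom-powers, which is essential for manipulating the exponents $\alpha^{j-1}$ and $\alpha^{i-1}$ appearing in \eqref{xij}. Next, assuming $A$ is $k$th Hom-power associative for all $k < n$, I would compute $x^{n-i,i} = \alpha^{i-1}(x^{n-i})\,\alpha^{n-i-1}(x^i)$ and rewrite $x^i = x^{i-1}\alpha^{i-2}(x)$, then apply \eqref{righthomalt'} (in the form $(ab)\alpha(c) + (ac)\alpha(b) = \alpha(a)(bc + cb)$) with suitable substitutions $a = \alpha^{?}(x^{n-i})$, $b, c$ appropriate powers of $x$, to trade $x^{n-i,i}$ against $x^{n-i,i-1}$ plus a correction term. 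Iterating this recursion in $i$ should telescope $x^{n-i,i}$ down to $x^{n-1,1} = x^n$, provided the correction terms vanish or reorganize using the inductive hypothesis at level $< n$ together with the commutativity combination $bc + cb$ that \eqref{righthomalt'} naturally produces.

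The main obstacle I anticipate is \textbf{bookkeeping of the twisting powers}: each application of \eqref{righthomalt'} introduces a single $\alpha$ on the outside and inside in a fixed pattern, and one must check that these match the exponents $\alpha^{j-1}$, $\alpha^{i-1}$ prescribed in the definition \eqref{xij} of $x^{i,j}$ — otherwise the telescoping produces terms like $\alpha^{s}(x^p)\alpha^{t}(x^q)$ with $s + p - 1 \ne t + q - 1$ that are not of the form $x^{p,q}$. Getting the exponents to line up is exactly where the specific choices $\alpha^{j-1}(x^i)$ and $\alpha^{i-1}(x^j)$ in \eqref{xij} pay off, and I would verify the key identity $x^{m,1} = x^{m+1}$ and a symmetric reduction $x^{1,m}$ as a sanity check before doing the general step. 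A secondary subtlety is that \eqref{righthomalt'} is genuinely symmetric in its last two slots, so to extract a clean recursion I may need to also use its antisymmetric companion \eqref{righthomalt}, $as_A(x,y,z) = -as_A(x,z,y)$, to control the difference $x^{n-i,i} - x^{n-i,i-1}$; combined with the inductive hypothesis these should cancel, but the cancellation is the crux and will require the careful substitution $y,z \mapsto$ (powers of $x$) that makes both $yz$ and $zy$ equal to the same Hom-power by a lower-level instance of power associativity.
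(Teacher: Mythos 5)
Your overall strategy is the paper's strategy: an outer induction on $n$, an inner argument over $i$, specialization of the linearized identity \eqref{righthomalt'} to twisted Hom-powers of a single element, multiplicativity to slide powers of $\alpha$ across products, and the lower-level induction hypothesis to collapse the symmetric combination $bc+cb$ into a single Hom-power. But as written there is a genuine gap: the decisive step --- exhibiting the actual substitution into \eqref{righthomalt'} and checking that the resulting $\alpha$-exponents match the definition \eqref{xij} --- is never carried out, and you yourself flag this bookkeeping and ``the cancellation'' as unresolved. Moreover, the recursion you announce, trading $x^{n-i,i}$ against $x^{n-i,i-1}$, cannot be correct as stated: $x^{n-i,i-1}$ is a splitting of $x^{n-1}$, not of $x^n$, so the two sides have different total degree in $x$. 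What you have is a correct plan whose crux is missing.

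The gap is filled exactly where you expect it. For $n\geq 3$ and $1\leq i\leq n-2$, substitute
\[
(x,y,z) \;=\; \left(\alpha^{i-1}(x^{n-(i+1)}),\ \alpha^{n-i-2}(x^i),\ \alpha^{n-3}(x)\right)
\]
into \eqref{righthomalt'}. Using $\alpha^k(a)\alpha^k(b)=\alpha^k(ab)$, the recursion $x^{m}=x^{m-1}\alpha^{m-2}(x)$ from \eqref{hompower}, and the induction hypothesis for powers $<n$ (in particular $x^{n-(i+1),i}=x^{n-1}$ and $x^{1,i}=\alpha^{i-1}(x)x^i=x^{i+1}$), the left-hand side becomes $x^{n-1}\alpha^{n-2}(x)+\alpha^{i-1}(x^{n-i})\alpha^{n-i-1}(x^i)=x^n+x^{n-i,i}$, while the right-hand side becomes $\alpha^{i}(x^{n-(i+1)})\,\alpha^{n-i-2}(2x^{i+1})=2x^{n-(i+1),i+1}$. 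So the correct relation is the three-term recursion $2x^{n-(i+1),i+1}=x^n+x^{n-i,i}$, in which the ``correction term'' is $x^n$ itself; starting from $x^{n-1,1}=x^n$ and dividing by $2$ (characteristic $0$), it propagates over $i=1,\dots,n-2$ to give $x^{n-i,i}=x^n$ for all $i$, which is exactly \eqref{powerind} in the paper's proof. Your preliminary lemma $\alpha(x^m)=(\alpha(x))^m$ is true but not what is needed; the working form of multiplicativity here is $\alpha^{i-1}(x^{n-(i+1)})\alpha^{n-3}(x)=\alpha^{i-1}\bigl(x^{n-(i+1)}\alpha^{n-i-2}(x)\bigr)=\alpha^{i-1}(x^{n-i})$, and it is this identification, not a two-term telescope, that makes the exponents in \eqref{xij} line up.
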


\begin{proof}
Let $(A,\mu,\alpha)$ be a multiplicative right Hom-alternative algebra.  We prove the $n$th Hom-power associativity \eqref{nhpa} of $A$ by induction on $n \geq 2$.  Pick an element $x$ in $A$.  The case $n=2$ of \eqref{nhpa} is trivially true, since
\[
x^2 = xx = x^{1,1}
\]
by definition.

Inductively suppose we already proved that $A$ is $k$th Hom-power associative for all $k$ in the range $\{2,\ldots,n-1\}$ for some $n \geq 3$.  We establish the $n$th Hom-power associativity \eqref{nhpa} of $A$ by induction on $i$.  Suppose $i \in \{1,\ldots,n-2\}$.  In \eqref{righthomalt'} replace $(x,y,z)$ by
\begin{equation}
\label{xyz}
\left(\alpha^{i-1}(x^{n-(i+1)}), \alpha^{n-i-2}(x^i), \alpha^{n-3}(x)\right).
\end{equation}
Using the induction hypothesis, the left-hand side of \eqref{righthomalt'} then becomes:
\begin{equation}
\label{lpower}
\begin{split}
& x^{n-(i+1),i}\alpha^{n-2}(x) + \left(\alpha^{i-1}(x^{n-(i+1)})\alpha^{n-3}(x)\right) \alpha^{n-i-1}(x^i)\\
&= x^{n-1}\alpha^{n-2}(x) + \alpha^{i-1}\left(x^{n-(i+1)}\alpha^{n-i-2}(x)\right) \alpha^{n-i-1}(x^i)\\
&= x^n + \alpha^{i-1}(x^{n-i}) \alpha^{n-i-1}(x^i)\\
&= x^n + x^{n-i,i}.
\end{split}
\end{equation}
Likewise, by the induction hypothesis, the right-hand side of \eqref{righthomalt'} becomes:
\begin{equation}
\label{rpower}
\begin{split}
& \alpha^i(x^{n-(i+1)}) \alpha^{n-i-2}\left(x^i\alpha^{i-1}(x) + \alpha^{i-1}(x)x^i\right)\\
&= \alpha^i(x^{n-(i+1)}) \alpha^{n-i-2}\left(x^{i+1} + x^{i+1}\right)\\
&= 2\alpha^i(x^{n-(i+1)}) \alpha^{n-i-2}(x^{i+1})\\
&= 2x^{n-(i+1),i+1}.
\end{split}
\end{equation}
Therefore, the special case of \eqref{righthomalt'} with $(x,y,z)$ as in \eqref{xyz} says
\begin{equation}
\label{powerind}
2x^{n-(i+1),i+1} = x^n + x^{n-i,i}
\end{equation}
for $i \in \{1,\ldots,n-2\}$.  It follows from the definition $x^n = x^{n-1,1}$ and \eqref{powerind} that
\[
x^n = x^{n-i,i}
\]
for $i \in \{1,\ldots,n-1\}$, so $A$ is $n$th Hom-power associative.
\end{proof}

Setting $\alpha = Id$ in Theorem \ref{thm:power}, we recover \cite{albert3} (Lemma 1).

\begin{corollary}
\label{cor:power}
Every right alternative algebra is power associative.
\end{corollary}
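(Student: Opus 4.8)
The corollary is the $\alpha = \mathrm{Id}$ specialization of Theorem \ref{thm:power}, so the obvious and intended route is simply to invoke that theorem: a right alternative algebra $(A,\mu)$ is the same thing as the Hom-algebra $(A,\mu,\mathrm{Id})$, which is trivially multiplicative (since $\mathrm{Id}\circ\mu = \mu\circ\mathrm{Id}^{\otimes 2}$) and right Hom-alternative (the Hom-associator $as_A$ with $\alpha = \mathrm{Id}$ is the ordinary associator, and \eqref{rhomalt} becomes $(xy)y = x(yy)$). Hence Theorem \ref{thm:power} applies and gives that $(A,\mu,\mathrm{Id})$ is Hom-power associative.

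It then remains only to unwind the definitions and observe that Hom-power associativity for a Hom-algebra with identity twisting map is exactly ordinary power associativity. Concretely, when $\alpha = \mathrm{Id}$, the recursion \eqref{hompower} gives $x^n = x^{n-1}x$ (the usual right powers), and \eqref{xij} gives $x^{i,j} = x^i x^j$; this is precisely the reduction already recorded in the paragraph containing \eqref{npa}. So the conclusion $x^n = x^{n-i,i}$ of Hom-power associativity reads $x^n = x^{n-i}x^i$ for all $n \geq 2$ and $1 \leq i \leq n-1$, which is the definition of power associativity (in Albert's right-power sense, which over a field of characteristic $0$ coincides with the usual notion). That is all the proof needs.

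There is essentially no obstacle here — the only thing one must be careful about is making sure the two translation steps (right alternative $\Leftrightarrow$ right Hom-alternative with $\alpha=\mathrm{Id}$, and Hom-power associative $\Leftrightarrow$ power associative with $\alpha=\mathrm{Id}$) are genuinely immediate from the definitions given, which they are: both are literally stated in the preceding exposition. So the proof is a one- or two-sentence citation of Theorem \ref{thm:power} together with the $\alpha = \mathrm{Id}$ dictionary.

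\begin{proof}
A right alternative algebra $(A,\mu)$ is, by the convention adopted after Definition \ref{def:homalg}, the Hom-algebra $(A,\mu,Id)$. This Hom-algebra is multiplicative, since $Id \circ \mu = \mu \circ Id^{\otimes 2}$, and it is right Hom-alternative, since with $\alpha = Id$ the Hom-associator \eqref{homass} is the ordinary associator and \eqref{rhomalt} becomes the right alternative identity $(xy)y = x(yy)$. By Theorem \ref{thm:power}, $(A,\mu,Id)$ is Hom-power associative. Finally, when $\alpha = Id$ the Hom-powers \eqref{hompower} are the ordinary right powers $x^n = x^{n-1}x$ and $x^{i,j} = x^ix^j$ by \eqref{xij}, so the $n$th Hom-power associativity condition $x^n = x^{n-i,i}$ is exactly \eqref{npa}. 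Hence $(A,\mu)$ is power associative.
\end{proof}
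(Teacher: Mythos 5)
Your proof is correct and is exactly the paper's argument: the corollary is obtained by setting $\alpha = Id$ in Theorem \ref{thm:power}, with the translation between Hom-power associativity and ordinary power associativity already recorded in the discussion around \eqref{npa}. Your write-up just spells out this specialization in slightly more detail than the paper does.
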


\section{Hom-Jordan admissibility}
\label{sec:homjordan}

The purpose of this section is to show that multiplicative right Hom-alternative algebras are Hom-Jordan admissible.  It is well-known that alternative algebras are Jordan admissible. The Hom-version of this statement is also true.  More precisely, the author proved in \cite{yau4} that every multiplicative Hom-alternative algebra is Hom-Jordan admissible.  Here we strengthen this result by removing the left Hom-alternativity assumption, while at the same time simplifying the argument.

Let us first recall some relevant definitions from \cite{yau4}.

\begin{definition}
\label{def:homjordan}
Let $(A,\mu,\alpha)$ be a Hom-algebra.
\begin{enumerate}
\item
Define the \textbf{plus Hom-algebra}
\[
A^+ = (A,\ast,\alpha),
\]
where $\ast = (\mu + \muop)/2$.
\item
$A$ is called a \textbf{Hom-Jordan algebra} if $\mu = \muop$ (commutativity) and the \textbf{Hom-Jordan identity}
\[
as_A(x^2,\alpha(y),\alpha(x)) = 0
\]
is satisfied for all $x,y \in A$, where $as_A$ is the Hom-associator \eqref{homassociator}.
\item
$A$ is called a \textbf{Hom-Jordan admissible algebra} if its plus Hom-algebra $A^+$ is a Hom-Jordan algebra.
\end{enumerate}
\end{definition}

If $\alpha = Id$ then the above definitions reduce to the usual notions of plus algebras and Jordan (admissible) algebras.

The reader is cautioned that the above definition of a Hom-Jordan algebra is not the same as the one in \cite{mak}.  A Hom-Jordan algebra in the sense of \cite{mak} is also a Hom-Jordan algebra in the sense of Definition \ref{def:homjordan}, but the converse is not true.

Note that the Jordan product
\[
x \ast y = \frac{1}{2}(\mu(x,y) + \mu(y,x)) = \frac{1}{2}(xy + yx) = y \ast x
\]
is commutative and that
\[
x \ast x = \mu(x,x) = x^2
\]
for all $x \in A$.

\begin{lemma}
\label{lem:jordan}
Let $(A,\mu,\alpha)$ be a Hom-algebra.  Then the following statements are equivalent.
\begin{enumerate}
\item
$A$ is Hom-Jordan admissible.
\item
The condition
\begin{equation}
\label{jhas}
as_{A^+}(x^2,\alpha(y),\alpha(x)) = 0
\end{equation}
holds for all $x,y \in A$.
\item
The condition
\begin{equation}
\label{hjplus}
(\alpha(x) \ast \alpha(y)) \ast \alpha(x^2) - \alpha^2(x) \ast (\alpha(y) \ast x^2) = 0
\end{equation}
holds for all $x,y \in A$.
\end{enumerate}
\end{lemma}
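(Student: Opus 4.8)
The plan is to establish the chain of equivalences (1)$\Leftrightarrow$(2) and (2)$\Leftrightarrow$(3) by pure unwinding of definitions; neither the right Hom-alternative hypothesis nor multiplicativity is needed, so in fact these three conditions are equivalent for an arbitrary Hom-algebra $(A,\mu,\alpha)$.

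For (1)$\Leftrightarrow$(2): by Definition \ref{def:homjordan}, $A$ is Hom-Jordan admissible exactly when the plus Hom-algebra $A^+ = (A,\ast,\alpha)$ is a Hom-Jordan algebra, which means that $\ast$ is commutative and that $as_{A^+}(w\ast w,\alpha(y),\alpha(w)) = 0$ for all $w,y\in A$. The commutativity of $\ast = (\mu+\muop)/2$ holds automatically, and, as already recorded, $w\ast w = \mu(w,w) = w^2$. Hence the Hom-Jordan identity for $A^+$ is literally the condition \eqref{jhas}, and (1) and (2) are equivalent.

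For (2)$\Leftrightarrow$(3): I would expand the Hom-associator $as_{A^+}$ using \eqref{homass} applied to the product $\ast$ and twisting map $\alpha$, namely $as_{A^+}(a,b,c) = (a\ast b)\ast\alpha(c) - \alpha(a)\ast(b\ast c)$, at $(a,b,c) = (x^2,\alpha(y),\alpha(x))$, which gives
\[
as_{A^+}(x^2,\alpha(y),\alpha(x)) = (x^2\ast\alpha(y))\ast\alpha^2(x) - \alpha(x^2)\ast(\alpha(y)\ast\alpha(x)).
\]
Then, applying only the commutativity of $\ast$ to reorder the factors in each of the two products, the first term rewrites as $\alpha^2(x)\ast(\alpha(y)\ast x^2)$ and the second as $(\alpha(x)\ast\alpha(y))\ast\alpha(x^2)$. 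Thus $as_{A^+}(x^2,\alpha(y),\alpha(x))$ equals the negative of the left-hand side of \eqref{hjplus}, so \eqref{jhas} holds for all $x,y$ if and only if \eqref{hjplus} does, completing the cycle.

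There is no real obstacle here; the argument is entirely formal. The only points that require care are (i) noting that the square of $x$ in $A^+$ coincides with $x^2 = \mu(x,x)$, so that the relevant instance of the Hom-Jordan identity for $A^+$ is precisely \eqref{jhas} and not a variant, and (ii) bookkeeping of the successive commutativity moves and the resulting overall sign in passing between \eqref{jhas} and \eqref{hjplus}. It is also worth remarking that multiplicativity of $\alpha$ plays no role, since it is $\alpha(x^2)$, rather than $(\alpha(x))^2$, that appears in \eqref{hjplus}.
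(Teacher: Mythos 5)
Your proof is correct and follows essentially the same route as the paper: (1)$\Leftrightarrow$(2) from commutativity of $\ast$ and $x\ast x = x^2$, and (2)$\Leftrightarrow$(3) by expanding $as_{A^+}$ and using commutativity, with the sign bookkeeping handled correctly. Your added remark that neither multiplicativity nor right Hom-alternativity is used matches the lemma's hypotheses as stated in the paper.
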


\begin{proof}
The equivalence of the first two statements follows from the commutativity of $\ast$ and the identity $x \ast x = x^2$.  The equivalence of \eqref{jhas} and \eqref{hjplus} follows by expanding the Hom-associator $as_{A^+}$ in \eqref{jhas} and using the commutativity of $\ast$.
\end{proof}

We are now ready for the main result of this section, which is the Hom-version of Theorem 2 in \cite{albert3}.

\begin{theorem}
\label{thm:jordan}
Every multiplicative right Hom-alternative algebra is Hom-Jordan admissible
\end{theorem}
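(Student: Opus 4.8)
The plan is to verify the identity \eqref{hjplus} from Lemma \ref{lem:jordan}(3) directly in a multiplicative right Hom-alternative algebra $(A,\mu,\alpha)$, since by that lemma this is equivalent to Hom-Jordan admissibility. The key computational tool is the linearized right Hom-alternative identity \eqref{righthomalt'}, namely $(xy)\alpha(z) + (xz)\alpha(y) = \alpha(x)(yz + zy)$, together with its skew-symmetry form \eqref{righthomalt}, $as_A(x,y,z) = -as_A(x,z,y)$, and the multiplicativity relation $\alpha\mu = \mu\alpha^{\otimes 2}$, which lets me move powers of $\alpha$ freely across products. First I would rewrite everything in terms of $\mu$ rather than $\ast$: expanding $x \ast y = \tfrac12(xy + yx)$ throughout \eqref{hjplus} produces a linear combination of $\mu$-monomials of the shape $(\alpha^a(x)\,\alpha^b(x))\,\alpha^c(x^2)$ and $\alpha^a(x)\,(\alpha^b(x)\,\alpha^c(x^2))$ plus the corresponding terms with $y$ inserted, and the goal becomes showing this combination vanishes. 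Since $\alpha$ is multiplicative, $x^2 = xx$ satisfies $\alpha^c(x^2) = (\alpha^c x)(\alpha^c x)$, so every monomial really only involves the three "letters" $\alpha^i(x)$ (several copies) and one copy of $\alpha^j(y)$.

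The heart of the argument is that \eqref{righthomalt'}, applied with the right substitutions, relates the two "associations" of a product of three such letters when two of the letters are equal. Concretely, I expect to invoke \eqref{righthomalt'} with $(x,y,z)$ replaced by a triple built from $\alpha$-shifts of $x$ and of $y$: the term $(yz+zy)$ on the right-hand side is exactly what produces the symmetrized $\ast$-products, and this is what matches the structure of \eqref{hjplus}. Following the model of Albert's Theorem 2 in \cite{albert3}, I anticipate needing two applications of \eqref{righthomalt'} — one to handle the "$x,x,y$-type" rearrangement and one for an "$x,y,x$-type" rearrangement — and possibly a use of the $n=3$, $n=4$ special cases of Hom-power associativity (Theorem \ref{thm:power}), i.e. $x^2\alpha(x) = \alpha(x)x^2$ and $x^4 = \alpha(x^2)\alpha(x^2)$, to collapse the pure-$x$ monomials. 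After substituting, all the $\alpha$-exponents must be made to agree; multiplicativity is what guarantees they can, and bookkeeping the exponents carefully (so that each monomial has total $\alpha$-degree matching \eqref{hjplus}) is the routine but unavoidable part.

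The main obstacle I foresee is precisely this exponent bookkeeping: \eqref{hjplus} is not homogeneous in an obvious way across its two terms until one uses multiplicativity to normalize, and choosing the substitution triple for \eqref{righthomalt'} so that, after distributing $\alpha$'s, the left- and right-hand sides land on exactly the monomials appearing in the expansion of \eqref{hjplus} requires some care. A secondary subtlety is that \eqref{righthomalt'} only directly controls associators of the form $as(u,v,v)$ (equivalently the symmetric combination in $v$), so to reach the genuinely asymmetric pieces of \eqref{hjplus} I will likely need to combine \eqref{righthomalt'} with the skew-symmetry \eqref{righthomalt} and a linearization — replacing a letter by a sum of two letters — which multiplies the number of terms to track. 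Once the substitution is pinned down, the verification should reduce to cancelling a handful of monomials, exactly as in the $\alpha = Id$ case; the Hom-version introduces no genuinely new phenomenon beyond the twisting, which multiplicativity tames.
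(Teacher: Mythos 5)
Your plan is essentially the paper's proof: reduce via Lemma \ref{lem:jordan} to the identity \eqref{hjplus}, then apply the linearized right Hom-alternative identity \eqref{righthomalt'} twice --- the paper's substitutions are $(x^2,\alpha(x),\alpha(y))$ and $(\alpha(x),\alpha(y),x^2)$ --- together with $x^2\alpha(x)=\alpha(x)x^2$ (from $as_A(x,x,x)=0$) and multiplicativity, after which the surviving monomials in $4$ times the left side of \eqref{hjplus} combine into $-as_A(yx,\alpha(x),\alpha(x))=0$. The fourth-power identity $x^4=\alpha(x^2)\alpha(x^2)$ you mention is not needed, and the cancellation is exactly the ``handful of monomials'' you predict.
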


\begin{proof}
Let $(A,\mu,\alpha)$ be a multiplicative right Hom-alternative algebra.  By Lemma \ref{lem:jordan} it suffices to prove \eqref{hjplus}.  Pick elements $x,y \in A$.  First note that the right Hom-alternative identity \eqref{rhomalt} implies
\begin{equation}
\label{thirdhpa}
0 = as(x,x,x) = x^2\alpha(x) - \alpha(x)x^2.
\end{equation}
Also, the special cases of \eqref{righthomalt'} with $(x,y,z)$ replaced by $(x^2,\alpha(x),\alpha(y))$ and $(\alpha(x),\alpha(y),x^2)$ are
\begin{equation}
\label{xyz1}
\alpha(x^2) \left(\alpha(x)\alpha(y) + \alpha(y)\alpha(x)\right) = \left(x^2\alpha(x)\right)\alpha^2(y) + \left(x^2\alpha(y)\right)\alpha^2(x)
\end{equation}
and
\begin{equation}
\label{xyz2}
\alpha^2(x) \left(\alpha(y)x^2 + x^2\alpha(y)\right) = \left(\alpha(x)\alpha(y)\right)\alpha(x^2) + \left(\alpha(x)x^2\right)\alpha^2(y),
\end{equation}
respectively.

Using \eqref{thirdhpa}, \eqref{xyz1}, and \eqref{xyz2}, we now compute the left-hand side of \eqref{hjplus} multiplied by $4$:
\[
\begin{split}
& 4\left(\alpha(x) \ast \alpha(y)\right) \ast \alpha(x^2) - 4\alpha^2(x) \ast \left(\alpha(y) \ast x^2\right)\\
&= \left(\alpha(x)\alpha(y) + \alpha(y)\alpha(x)\right) \alpha(x^2) + \alpha(x^2) \left(\alpha(x)\alpha(y) + \alpha(y)\alpha(x)\right)\\
&\relphantom{} - \alpha^2(x) \left(\alpha(y)x^2 + x^2\alpha(y)\right) - \left(\alpha(y)x^2 + x^2\alpha(y)\right) \alpha^2(x)\\
&= \left(\alpha(x)\alpha(y)\right) \alpha(x^2) + \underbrace{\left(\alpha(y)\alpha(x)\right) \alpha(x^2)}_{(a)} + \left(x^2\alpha(x)\right)\alpha^2(y) + \left(x^2\alpha(y)\right)\alpha^2(x)\\
&\relphantom{} - \left(\alpha(x)\alpha(y)\right)\alpha(x^2) - \left(\alpha(x)x^2\right)\alpha^2(y) - \underbrace{\left(\alpha(y)x^2\right) \alpha^2(x)}_{(b)} - \left(x^2\alpha(y)\right) \alpha^2(x)\\
&= \underbrace{\alpha(yx)\left(\alpha(x)\alpha(x)\right)}_{(a)} - \underbrace{((yx)\alpha(x))\alpha^2(x)}_{(b)}\\
&= -as_A(yx,\alpha(x),\alpha(x)) = 0.
\end{split}
\]
This shows that \eqref{hjplus} is satisfied, so $A$ is Hom-Jordan admissible.
\end{proof}

Setting $\alpha = Id$ in Theorem \ref{thm:jordan}, we recover \cite{albert3} (Theorem 2).

\begin{corollary}
\label{cor:jordan}
Every right alternative algebra is Jordan admissible.
\end{corollary}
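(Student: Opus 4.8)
The plan is to obtain Corollary \ref{cor:jordan} as an immediate specialization of Theorem \ref{thm:jordan}. The key observation is that an ordinary algebra $(A,\mu)$ is, by the convention fixed just after Definition \ref{def:homalg}, the same as the Hom-algebra $(A,\mu,Id)$, and this Hom-algebra is automatically multiplicative since $Id\circ\mu=\mu\circ Id^{\otimes 2}$. Moreover, with $\alpha=Id$ the Hom-associator $as_A$ of \eqref{homassociator} reduces to the usual associator, so the right Hom-alternative identity \eqref{rhomalt} becomes exactly the classical right alternative identity $(xy)y=x(yy)$. Hence a right alternative algebra, viewed as a Hom-algebra, is a multiplicative right Hom-alternative algebra.

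First I would state this identification explicitly: if $(A,\mu)$ is right alternative, then $(A,\mu,Id)$ is a multiplicative right Hom-alternative algebra. Then I would apply Theorem \ref{thm:jordan} to conclude that $(A,\mu,Id)$ is Hom-Jordan admissible, i.e. its plus Hom-algebra $A^+=(A,\ast,Id)$ is a Hom-Jordan algebra in the sense of Definition \ref{def:homjordan}. Finally I would unwind the definitions in the $\alpha=Id$ case: the plus Hom-algebra $A^+$ is just the usual plus algebra with product $x\ast y=\tfrac12(xy+yx)$, the Hom-Jordan identity $as_{A^+}(x^2,\alpha(y),\alpha(x))=0$ becomes the classical Jordan identity $(x^2\ast y)\ast x=x^2\ast(y\ast x)$ (equivalently $(x\ast y)\ast x^2=x^2\ast(y\ast x)$ after using commutativity and $x\ast x=x^2$), and commutativity of $\ast$ is automatic. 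Therefore $A^+$ is a Jordan algebra, which is precisely the statement that $A$ is Jordan admissible.

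There is essentially no obstacle here; the only point requiring a word of care is the remark in the paper that a Hom-Jordan algebra in the sense of Definition \ref{def:homjordan} need not be one in the sense of \cite{mak}. But when $\alpha=Id$ both notions collapse to the ordinary notion of a Jordan algebra, so no ambiguity remains and the corollary follows cleanly.

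\begin{proof}
Let $(A,\mu)$ be a right alternative algebra. Regard it as the Hom-algebra $(A,\mu,Id)$, which is multiplicative since $Id\circ\mu=\mu\circ Id^{\otimes 2}$. With $\alpha=Id$ the Hom-associator $as_A$ in \eqref{homassociator} is the ordinary associator, so the right Hom-alternative identity \eqref{rhomalt} coincides with the right alternative identity. Thus $(A,\mu,Id)$ is a multiplicative right Hom-alternative algebra, and Theorem \ref{thm:jordan} shows it is Hom-Jordan admissible. In the $\alpha=Id$ case the plus Hom-algebra $A^+$ of Definition \ref{def:homjordan} is the usual plus algebra $(A,\ast)$ with $x\ast y=\tfrac12(xy+yx)$, and the Hom-Jordan identity $as_{A^+}(x^2,\alpha(y),\alpha(x))=0$ becomes the ordinary Jordan identity. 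Hence $A^+$ is a Jordan algebra, i.e.\ $A$ is Jordan admissible.
\end{proof}
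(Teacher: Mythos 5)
Your proof is correct and matches the paper's argument: the paper also obtains this corollary by setting $\alpha = Id$ in Theorem \ref{thm:jordan}, noting that the Hom-notions reduce to the classical ones. Your write-up simply spells out the routine identifications that the paper leaves implicit.
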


\section{Albert-type decompositions}
\label{sec:decomp}

The purpose of this section is to study Albert-type decompositions for right Hom-alternative algebras with respect to idempotents.

Let us first define the Hom-version of an idempotent.

\begin{definition}
\label{def:idempotent}
Let $(A,\mu,\alpha)$ be a Hom-algebra.  An \textbf{idempotent} in $A$ is an element $e \in A$ that satisfies
\begin{equation}
\label{idempotent}
e^2 = e = \alpha(e),
\end{equation}
where $e^2 = \mu(e,e)$.
\end{definition}

In the above definition, if $\alpha = Id$ then $e \in A$ is an idempotent if and only if $e^2 = e$.  This, of course, is the usual definition of an idempotent in an algebra.

We first observe that the property of being an idempotent is preserved under the constructions in section \ref{sec:ex}.  The following result is an immediate consequence of Definition \ref{def:idempotent}.

\begin{proposition}
\label{prop1:idempotent}
Let $(A,\mu,\alpha)$ be a Hom-algebra and $e \in A$ be an idempotent.
\begin{enumerate}
\item
If $\beta \colon A \to A$ is a linear map such that $\beta(e) = e$, then $e$ is an idempotent in
\[
A_\beta = (A,\mubeta = \beta\mu,\beta\alpha).
\]
\item
The element $e$ is an idempotent in
\[
A^n = (A,\mun = \alpha^{n}\mu,\alpha^{n+1})
\]
for each $n \geq 0$.
\end{enumerate}
\end{proposition}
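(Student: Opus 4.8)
The plan is to verify directly that $e$ satisfies the defining conditions \eqref{idempotent} of an idempotent in each of the two twisted Hom-algebras, using nothing more than Definition \ref{def:idempotent} and the formulas for the twisted multiplications and twisting maps. The whole argument is a short unwinding of definitions; there is no real obstacle here, only bookkeeping.

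For part (1), I would argue as follows. By hypothesis $e^2 = e = \alpha(e)$ and $\beta(e) = e$. The multiplication in $A_\beta$ is $\mubeta = \beta\mu$, so the square of $e$ in $A_\beta$ is $\mubeta(e,e) = \beta(\mu(e,e)) = \beta(e^2) = \beta(e) = e$. The twisting map of $A_\beta$ is $\beta\alpha$, and $(\beta\alpha)(e) = \beta(\alpha(e)) = \beta(e) = e$. Hence $e$ squares to $e$ and is fixed by the new twisting map, which is exactly \eqref{idempotent} for $A_\beta$.

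For part (2), I would deduce it from part (1), or equally well verify it directly. Since $A^n = A_{\alpha^n}$ with $\mun = \alpha^n\mu$ and twisting map $\alpha^{n+1}$, and since $\alpha(e) = e$ forces $\alpha^n(e) = e$ for every $n \geq 0$ (by an immediate induction, the base case $n=0$ being $\alpha^0 = \mathrm{Id}$), the linear map $\beta = \alpha^n$ satisfies $\beta(e) = e$. Applying part (1) with this $\beta$ gives that $e$ is an idempotent in $A_{\alpha^n} = A^n$. Alternatively and just as quickly: the square of $e$ in $A^n$ is $\alpha^n(\mu(e,e)) = \alpha^n(e) = e$, and the twisting map satisfies $\alpha^{n+1}(e) = \alpha(\alpha^n(e)) = \alpha(e) = e$.

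I expect no genuine difficulty; the only point requiring a word of care is the observation that $\alpha(e) = e$ propagates to $\alpha^n(e) = e$ for all $n \geq 0$, which is what lets part (2) follow from part (1). Everything else is the direct substitution of the twisted structure maps into \eqref{idempotent}.
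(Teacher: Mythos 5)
Your proof is correct and is exactly the direct verification the paper has in mind: the paper states the proposition as an immediate consequence of Definition \ref{def:idempotent}, and your unwinding (checking $\mubeta(e,e)=e$, $(\beta\alpha)(e)=e$, and propagating $\alpha(e)=e$ to $\alpha^n(e)=e$) is that same argument, just written out.
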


The following observation is the $\alpha = Id$ special case of the first part of Proposition \ref{prop1:idempotent}.

\begin{corollary}
\label{cor1:idempotent}
Let $(A,\mu)$ be an algebra, $e \in A$ be an idempotent, and $\beta \colon A \to A$ be a linear map such that $\beta(e) = e$.  Then $e$ is an idempotent in $A_\beta = (A,\mubeta = \beta\mu,\beta)$.
\end{corollary}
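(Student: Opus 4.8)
The plan is to observe that Corollary \ref{cor1:idempotent} is simply the specialization $\alpha = Id$ of Proposition \ref{prop1:idempotent}(1), so essentially nothing new needs to be done beyond checking that the hypotheses of the proposition are satisfied in this setting. Given an algebra $(A,\mu)$, we regard it as the Hom-algebra $(A,\mu,Id)$ with identity twisting map, as stipulated in the conventions following Definition \ref{def:homalg}. An idempotent $e \in A$ in the ordinary sense, i.e.\ $e^2 = e$, is then an idempotent in the Hom-algebra sense of Definition \ref{def:idempotent}, since $\alpha(e) = Id(e) = e$ holds automatically, so that $e^2 = e = \alpha(e)$.

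Next I would apply the first part of Proposition \ref{prop1:idempotent} with $\alpha = Id$. The hypothesis there is that $\beta \colon A \to A$ is a linear map with $\beta(e) = e$, which is exactly the hypothesis of the corollary. The conclusion is that $e$ is an idempotent in $A_\beta = (A, \mubeta = \beta\mu, \beta\alpha)$. Since $\alpha = Id$ here, we have $\beta\alpha = \beta$, so $A_\beta = (A, \mubeta = \beta\mu, \beta)$, which is precisely the Hom-algebra appearing in the statement of the corollary. This completes the argument.

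There is no real obstacle: the only point worth noting is that one must be careful that the composite twisting map $\beta\alpha$ in Proposition \ref{prop1:idempotent} reduces correctly to $\beta$ when $\alpha = Id$, and that the condition ``$\beta(e) = e$'' transports verbatim. Both are immediate. Thus the proof is just the single sentence identifying the corollary with the $\alpha = Id$ case of the proposition.

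\begin{proof}
This is the $\alpha = Id$ special case of the first part of Proposition \ref{prop1:idempotent}.  Indeed, regarding $(A,\mu)$ as the Hom-algebra $(A,\mu,Id)$, the idempotent $e$ satisfies $e^2 = e = Id(e)$, so it is an idempotent in the sense of Definition \ref{def:idempotent}.  Since $\beta(e) = e$, Proposition \ref{prop1:idempotent}(1) shows that $e$ is an idempotent in $A_\beta = (A,\mubeta = \beta\mu,\beta \cdot Id) = (A,\mubeta = \beta\mu,\beta)$.
\end{proof}
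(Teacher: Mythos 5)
Your proof is correct and matches the paper exactly: the paper likewise presents this corollary as the $\alpha = Id$ special case of the first part of Proposition \ref{prop1:idempotent}, with no further argument needed. Your additional verification that $e^2 = e = Id(e)$ and that $\beta \cdot Id = \beta$ is fine, though routine.
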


\begin{example}
In the five-dimensional right alternative algebra $A$ in Example \ref{ex:5d}, the basis element $e$ is an idempotent.  The map $\alpha_{\gamma, \delta, \epsilon} = \alpha \colon A \to A$ in \eqref{5dalpha} fixes $e$ if and only if $\epsilon = 0$.  Therefore, by Corollary \ref{cor1:idempotent} the element $e$ is an idempotent in the multiplicative right Hom-alternative algebra $A_\alpha$ in \eqref{5daalpha} if and only if $\epsilon = 0$.
\qed
\end{example}

Let $e$ be an idempotent in a right alternative algebra $A$.  Then there is an \textbf{Albert decomposition} \cite{albert3}
\begin{equation}
\label{albertdecomp}
A = A_e(1) \oplus A_e(0),
\end{equation}
where
\[
A_e(i) = \{a \in A \colon ae = ia\}
\]
for $i=0,1$.  More precisely, every $a \in A$ can be decomposed uniquely as
\[
a = ae + (a - ae)
\]
with $ae \in A_e(1)$ and $(a - ae) \in A_e(0)$.

The following result is a Hom-type generalization of the Albert decomposition \eqref{albertdecomp} for right Hom-alternative algebras.

\begin{proposition}
\label{prop:decomp}
Let $(A,\mu,\alpha)$ be a multiplicative right Hom-alternative algebra in which $\alpha$ is surjective and $e \in A$ be an idempotent.  Then there is a not-necessarily-direct sum
\begin{equation}
\label{decompabar}
A = A_e(\alpha) + A_e(0)
\end{equation}
of sub-Hom-modules, where
\[
A_e(i\alpha) = \{a \in A \colon ae = i\alpha(a)\}
\]
for $i\in\{0,1\}$.  The sum \eqref{decompabar} is a direct sum if $\alpha$ is also injective.
\end{proposition}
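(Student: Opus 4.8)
The plan is to imitate the classical Albert decomposition, but to track the twisting map carefully since $ae$ need no longer lie in an eigenspace of "right multiplication by $e$." The key computational tool is the right Hom-alternative identity in its linearized form \eqref{righthomalt'}, specialized so that two of the three slots are filled with the idempotent $e$. First I would record the basic consequence of \eqref{righthomalt'} with $(x,y,z) = (a,e,e)$: since $e^2 = e = \alpha(e)$ and $\alpha(e)=e$, this reads $(ae)\alpha(e) + (ae)\alpha(e) = \alpha(a)(ee + ee)$, i.e. $(ae)e = \alpha(a)e$, so that $(ae)e = \alpha(a)e$ for every $a \in A$. Thus right multiplication by $e$, composed with itself, agrees with $R_e \circ \alpha$.

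Next I would use surjectivity of $\alpha$ to set up the decomposition. Given $a \in A$, pick $b \in A$ with $\alpha(b) = a$ and write
\[
a = be + (a - be).
\]
I would then check that $be \in A_e(\alpha)$: indeed $(be)e = \alpha(b)e = ae = \alpha(be)$ would follow once I verify $\alpha(be) = ae$, which holds by multiplicativity, $\alpha(be) = \alpha(b)\alpha(e) = ae$. Hence $(be)e = \alpha(be)$, so $be \in A_e(\alpha)$. For the complementary piece, $(a-be)e = ae - (be)e = ae - ae = 0$, so $a - be \in A_e(0)$. This gives the sum \eqref{decompabar}. I would also note that $A_e(\alpha)$ and $A_e(0)$ are each closed under $\alpha$ — if $ae = i\alpha(a)$ then applying $\alpha$ and using multiplicativity gives $\alpha(a)\alpha(e) = i\alpha^2(a)$, i.e. $\alpha(a)e = i\alpha(\alpha(a))$ — so they are genuine sub-Hom-modules in the sense required.

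Finally, for the directness statement, suppose $\alpha$ is also injective (hence bijective) and take $a \in A_e(\alpha) \cap A_e(0)$. Then $ae = \alpha(a)$ and $ae = 0$, so $\alpha(a) = 0$, and injectivity forces $a = 0$; thus the sum is direct. The main obstacle I anticipate is not any single hard step but rather being careful about where multiplicativity versus surjectivity is actually used: multiplicativity is what makes $\alpha(be) = \alpha(b)e$ and what shows the summands are $\alpha$-stable, while surjectivity is exactly what lets us produce the preimage $b$ so that $be$ plays the role that $ae$ plays in the untwisted case. One should double-check that no hidden use of left Hom-alternativity or flexibility creeps in — by Lemma \ref{lem:linearized} everything here is a consequence of \eqref{righthomalt'} alone, so the argument stays within the right Hom-alternative hypothesis.
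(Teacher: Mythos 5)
Your proposal is correct and follows essentially the same route as the paper: use the right Hom-alternative identity with two slots equal to $e$ to get $(be)e=\alpha(b)e$, use surjectivity to write a general element as $\alpha(b)$ and split it as $be+(a-be)$, use multiplicativity (and $\alpha(e)=e$) for $\alpha$-stability of the summands, and use injectivity for directness. The only difference is cosmetic relabeling of the preimage, so nothing further is needed.
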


\begin{proof}
First observe that, if $\alpha$ is injective, then the intersection of $A_e(\alpha)$ and $A_e(0)$ is $0$.  Indeed, if $x \in A$ belongs to both submodules, then
\[
\alpha(x) = xe = 0.
\]
The injectivity of $\alpha$ implies that $x = 0$.

To see that the submodule $A_e(i\alpha)$ is a sub-Hom-module for $i\in \{0,1\}$, suppose $a \in A_e(i\alpha)$.  Then
\[
\begin{split}
\alpha(a)e &= \alpha(a)\alpha(e)\\
&= \alpha(ae)\\
&= \alpha(i\alpha(a))\\
&= i\alpha(\alpha(a)).
\end{split}
\]
So $\alpha(a)$ is also in $A_e(i\alpha)$.

It remains to show that the sum $A_e(\alpha) + A_e(0)$ is all of $A$.  Pick an arbitrary element $b \in A$.  The surjectivity of $\alpha$ implies that $b = \alpha(a)$ for some $a \in A$.  Consider the decomposition
\[
b = ae + (b - ae).
\]
We claim that $ae \in A_e(\alpha)$ and that $(b - ae) \in A_e(0)$.  Indeed, we have
\[
\begin{split}
(ae)e &= (ae)\alpha(e)\\
&= \alpha(a)(ee)\\
&= \alpha(a)\alpha(e)\\
&= \alpha(ae).
\end{split}
\]
This shows that $ae \in A_e(\alpha)$.  Likewise, we have
\[
\begin{split}
(b - ae)e &= be - (ae)\alpha(e)\\
&= \alpha(a)e - \alpha(a)(ee)\\
&= 0,
\end{split}
\]
which shows that $(b - ae) \in A_e(0)$.
\end{proof}

In particular, if $\alpha = Id$ in Proposition \ref{prop:decomp}, then $A_e(Id) = A_e(1)$ and the decomposition $A_e(Id) \oplus A_e(0)$ becomes the Albert decomposition \eqref{albertdecomp}.

\section{Multiplication operators induced by idempotents}
\label{sec:idempotent}

In this section, we study multiplication operators in right Hom-alternative algebras, especially those induced by idempotents.  Let us first fix some notations.

\begin{definition}
\label{def:multiplication}
Let $(A,\mu,\alpha)$ be a Hom-algebra and $x$ be an element in $A$.  Define $L_x, R_x \colon A \to A$ to be the operators of left and right multiplication by $x$, acting from the right, i.e.,
\[
aL_x = xa \quad\text{and}\quad aR_x = ax
\]
for all $a \in A$.
\end{definition}

The convention of $L_x$ and $R_x$ acting from the right is often used in the literature, e.g., \cite{albert2,albert3}.  We also allow the twisting map $\alpha \colon A \to A$ to act from the right, i.e.,
\[
a\alpha = \alpha(a)
\]
for $a \in A$.  The operators $\alpha$, $L_x$, and $R_x$ are all in $\Hom(A,A)$, the module of linear operators on $A$ acting from the right.  Since composition of linear operators is associative, $\Hom(A,A)$ is a Lie algebra under the commutator bracket,
\[
[f,g] = fg - gf
\]
for $f,g \in \Hom(A,A)$.

\begin{lemma}
\label{lem:Rx}
Let $(A,\mu,\alpha)$ be a right Hom-alternative algebra.  Then
\begin{equation}
\label{righthomalt2}
R_x R_{\alpha(x)} = \alpha R_{xx}
\end{equation}
and
\begin{equation}
\label{righthomalt3}
L_y L_{\alpha(x)} - \alpha L_{xy} = L_x R_{\alpha(y)} - R_y L_{\alpha(x)}
\end{equation}
for all $x,y \in A$.
\end{lemma}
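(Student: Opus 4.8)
The plan is to prove both identities by evaluating each side, as an operator in $\Hom(A,A)$, on an arbitrary element $a \in A$, and then recognizing the resulting element-wise equations as the right Hom-alternative identity and its linearized form.

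For \eqref{righthomalt2}, I would first unwind the right-action conventions. Since the operators act from the right and composites are read left to right, one has $a(R_x R_{\alpha(x)}) = (ax)\alpha(x)$, while $a(\alpha R_{xx}) = \alpha(a)(xx)$. Hence \eqref{righthomalt2} holds for all $x \in A$ precisely when $(ax)\alpha(x) = \alpha(a)(xx)$ for all $a,x \in A$, and this is exactly the statement $as_A(a,x,x) = 0$, i.e., the right Hom-alternative identity \eqref{rhomalt}. So this half is immediate.

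For \eqref{righthomalt3}, I would evaluate the four relevant composites on $a$: a direct check gives $a(L_y L_{\alpha(x)}) = \alpha(x)(ya)$, $a(\alpha L_{xy}) = (xy)\alpha(a)$, $a(L_x R_{\alpha(y)}) = (xa)\alpha(y)$, and $a(R_y L_{\alpha(x)}) = \alpha(x)(ay)$. Substituting these into \eqref{righthomalt3} and collecting terms, the claimed operator identity becomes equivalent to
\[
\alpha(x)\bigl(ya + ay\bigr) = (xy)\alpha(a) + (xa)\alpha(y)
\]
for all $a,x,y \in A$, which is precisely \eqref{righthomalt'} with $(x,y,z)$ replaced by $(x,y,a)$. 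Since $A$ is right Hom-alternative, \eqref{righthomalt'} holds by Lemma \ref{lem:linearized}, which finishes the proof.

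The only point demanding care — not really an obstacle — is the bookkeeping: because $\alpha$, $L_x$, and $R_x$ all act from the right, one must be consistent that $a(fg)$ means ``apply $f$ to $a$ first, then $g$,'' which is why, for instance, $L_y L_{\alpha(x)}$ produces $\alpha(x)(ya)$ rather than $(ya)\alpha(x)$. Once the element-wise translations are carried out correctly, neither identity requires more than quoting \eqref{rhomalt} and Lemma \ref{lem:linearized}.
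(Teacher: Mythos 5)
Your proposal is correct and matches the paper's proof, which simply asserts that \eqref{righthomalt2} and \eqref{righthomalt3} are the operator forms of the right Hom-alternative identity \eqref{rhomalt} and its linearized form \eqref{righthomalt'}; your element-wise evaluation, with the right-action bookkeeping handled correctly, is exactly the verification implicit in that statement.
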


\begin{proof}
The desired identities \eqref{righthomalt2} and \eqref{righthomalt3} are the operator forms of the right Hom-alternative identity \eqref{rhomalt} and the linearized right Hom-alternative identity \eqref{righthomalt'}, respectively.
\end{proof}

We now restrict our attention to the multiplication operators induced by idempotents.

\begin{proposition}
\label{lem1:idempotent}
Let $(A,\mu,\alpha)$ be a right Hom-alternative algebra and $e \in A$ be an idempotent.  Then
\begin{equation}
\label{re}
R_e^{n+1} = \alpha^n R_e
\end{equation}
for all $n \geq 0$, and
\begin{equation}
\label{le}
L_e^2 - \alpha L_e = [L_e, R_e].
\end{equation}
If, in addition, $A$ is multiplicative, then
\begin{equation}
\label{lr}
[\alpha, L_e] = 0 = [\alpha, R_e]
\end{equation}
\end{proposition}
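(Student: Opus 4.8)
The plan is to prove the three identities in Proposition \ref{lem1:idempotent} in order, using the operator forms of right Hom-alternativity from Lemma \ref{lem:Rx} together with the defining properties $e^2 = e = \alpha(e)$ of an idempotent.

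First I would establish \eqref{re} by induction on $n \geq 0$. The base case $n = 0$ is the trivial identity $R_e = R_e$. For the inductive step, I would start from \eqref{righthomalt2} with $x = e$: since $\alpha(e) = e$ and $ee = e$, this reads $R_e R_e = \alpha R_e$, i.e., $R_e^2 = \alpha R_e$. Then for $n \geq 1$, assuming $R_e^{n+1} = \alpha^n R_e$, I compute $R_e^{n+2} = R_e^{n+1} R_e = \alpha^n R_e R_e = \alpha^n (\alpha R_e) = \alpha^{n+1} R_e$, which closes the induction. (One should note that $R_e^2 = \alpha R_e$ suffices to get all higher powers even without knowing $[\alpha, R_e] = 0$ yet, since $\alpha^n$ is simply prepended.)

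For \eqref{le}, I would substitute $x = y = e$ into the linearized operator identity \eqref{righthomalt3}. With $\alpha(e) = e$, $ee = e$, the left side becomes $L_e L_e - \alpha L_e = L_e^2 - \alpha L_e$, and the right side becomes $L_e R_e - R_e L_e = [L_e, R_e]$, giving the claim directly.

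Finally, for \eqref{lr}, I would use multiplicativity $\alpha \mu = \mu \alpha^{\otimes 2}$, which in element form says $\alpha(ab) = \alpha(a)\alpha(b)$. To show $[\alpha, R_e] = 0$, I apply both sides to an arbitrary $a \in A$: $a(\alpha R_e) = \alpha(a) e = \alpha(a)\alpha(e) = \alpha(ae) = a(R_e \alpha)$, using $\alpha(e) = e$ in the middle; hence $\alpha R_e = R_e \alpha$. The argument for $[\alpha, L_e] = 0$ is symmetric: $a(\alpha L_e) = e\,\alpha(a) = \alpha(e)\alpha(a) = \alpha(ea) = a(L_e \alpha)$. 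I do not anticipate a genuine obstacle here; the only point requiring a little care is making sure the base identity $R_e^2 = \alpha R_e$ is extracted correctly before iterating, and keeping track that in the non-multiplicative parts \eqref{re} and \eqref{le} one must not assume $\alpha$ commutes with $L_e$ or $R_e$ — but the calculations above never need that.
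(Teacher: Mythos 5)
Your proof is correct and follows essentially the same route as the paper: \eqref{re} by induction from \eqref{righthomalt2} specialized at $x=e$ (using $e^2=e=\alpha(e)$), \eqref{le} by setting $x=y=e$ in \eqref{righthomalt3}, and \eqref{lr} from multiplicativity together with $\alpha(e)=e$ (the paper phrases this last step via the general operator identities $L_x\alpha=\alpha L_{\alpha(x)}$, $R_x\alpha=\alpha R_{\alpha(x)}$, while you verify it elementwise, which is the same computation). No gaps.
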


\begin{proof}
Write $R = R_e$.  The condition \eqref{re} is clearly true for $n = 0$.  Inductively, suppose \eqref{re} is true for a particular $n$.  Then we have
\[
\begin{split}
R^{n+2} &= R^{n+1}R\\
&= \alpha^n RR\quad\text{(by induction hypothesis)}\\
&= \alpha^n \alpha R,
\end{split}
\]
where the last equality follows from \eqref{righthomalt2} because $\alpha(e) = e = e^2$.  Therefore, \eqref{re} is true for all $n$.

The equality \eqref{le} is the special case of \eqref{righthomalt3} with $x = y = e$.

For the last assertion, the multiplicativity of $A$ implies
\[
L_x \alpha = \alpha L_{\alpha(x)} \quad\text{and}\quad R_x \alpha = \alpha R_{\alpha(x)}
\]
for all $x \in A$.  Restricting to the special case $x = e$, we obtain \eqref{lr}.
\end{proof}

A linear self-map $f \colon V \to V$ on a module $V$ is said to be an \textbf{idempotent} if $f^2 = f$.  We now introduce the Hom-version of this concept.

\begin{definition}
\label{def:alphaidempotent}
Let $(A,\mu,\alpha)$ be a Hom-algebra, $f \colon A \to A$ be a linear map acting from the right, and $n$ be an integer.  We say that $f$ is an \textbf{$\alpha^n$-idempotent} if
\[
f^2 = \alpha^n f
\]
in $\Hom(A,A)$.
\end{definition}

In particular, \eqref{re} with $n = 1$ says
\begin{equation}
\label{Rsquare}
R_e^2 = \alpha R_e.
\end{equation}
In other words, in a right Hom-alternative algebra, the right multiplication operator $R_e$ is an $\alpha$-idempotent.  On the other hand, \eqref{le} tells us that the left multiplication operator $L_e$ is not an $\alpha$-idempotent.  The following result, which is the Hom-version of \cite{albert3} (Lemma 4), says that $L_e$ is not too far from being an $\alpha$-idempotent.

\begin{theorem}
\label{prop:L}
Let $(A,\mu,\alpha)$ be a multiplicative right Hom-alternative algebra and $e \in A$ be an idempotent.  Then
\[
(L_e^2 - \alpha L_e)^2 = 0 = ([L_e, R_e])^2.
\]
\end{theorem}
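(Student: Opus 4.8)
The plan is to translate the statement into a single operator identity and then run a short computation using only the relations already established in Proposition~\ref{lem1:idempotent}. Set $A = L_e$, $B = R_e$, and $N = [L_e,R_e] = AB - BA$. By~\eqref{le} we have $N = A^2 - \alpha A$; by~\eqref{Rsquare} we have $B^2 = \alpha B$; and by~\eqref{lr} the operator $\alpha$ commutes with $A$ and with $B$, hence with $N$. Since~\eqref{le} says $L_e^2 - \alpha L_e = N$, the two expressions in the statement coincide, so it suffices to prove $N^2 = 0$.

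First I would record two consequences of these relations. From $N = A^2 - \alpha A$ and $\alpha A = A\alpha$ it follows that $N$ is a polynomial in the commuting operators $A$ and $\alpha$, so $NA = AN$. A second short computation, using $B^2 = \alpha B$ and $\alpha A = A\alpha$, gives
\[
NB + BN = (AB^2 - BAB) + (BAB - B^2A) = AB^2 - B^2A = \alpha AB - \alpha BA = \alpha N .
\]
Together with the tautology $BA = AB - N$, these are everything the argument needs.

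Finally I would compute $N^2$ by expanding only the right-hand copy of $N$:
\[
N^2 = N(AB - BA) = (NA)B - (NB)A = A(NB) - (NB)A ,
\]
where associativity of operator composition and $NA = AN$ were used. Substituting $NB = \alpha N - BN$ from the relation above, the first term becomes $\alpha AN - ABN$; for the second term one uses $NA = AN$ and then $B(AN) = (BA)N = (AB - N)N = ABN - N^2$ to obtain $(NB)A = \alpha AN - ABN + N^2$. Subtracting gives $N^2 = -N^2$, hence $2N^2 = 0$, and since $\bk$ has characteristic $0$ we conclude $N^2 = 0$, which is the desired identity $(L_e^2 - \alpha L_e)^2 = 0 = ([L_e,R_e])^2$. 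There is no genuine obstacle here; the only point requiring care is to expand $N^2$ asymmetrically so that $NB + BN = \alpha N$ can be brought to bear, and to keep track of the places where $\alpha$ must be commuted past $A$ or $B$ --- which is precisely where multiplicativity enters through~\eqref{lr}.
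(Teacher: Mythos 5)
Your argument is correct, and it takes a somewhat different route than the paper. Every step uses only the three relations from Proposition \ref{lem1:idempotent}, namely $R_e^2 = \alpha R_e$ \eqref{Rsquare}, $L_e^2 - \alpha L_e = [L_e,R_e]$ \eqref{le}, and $[\alpha,L_e] = 0 = [\alpha,R_e]$ \eqref{lr}, and the final division by $2$ is legitimate in characteristic $0$; also, reducing the statement to $N^2=0$ via \eqref{le} is exactly how the paper begins. The difference lies in the key intermediate identities. The paper works with $L = L_e$ directly: it writes $(L^2-\alpha L)^2 = (L^3-\alpha L^2)L - \alpha(L^3-\alpha L^2)$, computes $L^3 - \alpha L^2$ in two ways (as $L(L^2-\alpha L)$ and as $(L^2-\alpha L)L$) to extract $LRL = \alpha LR$ \eqref{lrl} and $L^3-\alpha L^2 = \alpha LR - RLR$ \eqref{l3''}, and then verifies $(L^3-\alpha L^2)L = \alpha(L^3-\alpha L^2)$. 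You instead work with the commutator $N = [L_e,R_e]$ itself, and your key facts are that $N$ commutes with $L_e$ (clear from $N = L_e^2 - \alpha L_e$ and \eqref{lr}) and satisfies the twisted anticommutation relation $NR_e + R_eN = \alpha N$ (a direct consequence of $R_e^2 = \alpha R_e$); the asymmetric expansion of $N^2$ then gives $N^2 = -N^2$. Your version is a bit shorter and isolates cleanly which structural properties of $N$ drive the result, while the paper's version produces along the way the explicit identities \eqref{lrl} and \eqref{l3''}, which have independent interest. Note that both arguments rely on dividing by $2$ (the paper does so implicitly when comparing \eqref{l3} and \eqref{l3'}), so neither is more general in that respect; the only cosmetic blemish in your write-up is the reuse of the symbol $A$ for both the algebra and the operator $L_e$.
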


\begin{proof}
By \eqref{le} it suffices to prove
\[
(L_e^2 - \alpha L_e)^2 = 0.
\]
Let us write $L = L_e$ and $R = R_e$.  First note that
\begin{equation}
\label{l2}
\begin{split}
(L^2 - \alpha L)^2 &= L^4 - 2\alpha L^3 + \alpha^2 L^2 \quad\text{(by \eqref{lr})}\\
&= (L^3 - \alpha L^2)L - \alpha(L^3 - \alpha L^2).
\end{split}
\end{equation}
To show that $(L^2 - \alpha L)^2 = 0$, we next compute $(L^3 - \alpha L^2)$ in two ways.  On the one hand, we have
\begin{equation}
\label{l3}
\begin{split}
L^3 - \alpha L^2 &= L(L^2 - \alpha L) \quad\text{(by \eqref{lr})}\\
&= L^2R - LRL \quad\text{(by \eqref{le})}\\
&= (\alpha L + LR - RL)R - LRL \quad\text{(by \eqref{le})}\\
&= 2\alpha LR - RLR - LRL \quad\text{(by \eqref{Rsquare} and \eqref{lr})}.
\end{split}
\end{equation}
On the other hand, we have
\begin{equation}
\label{l3'}
\begin{split}
L^3 - \alpha L^2 &= (L^2 - \alpha L)L\\
&= LRL - RL^2 \quad\text{(by \eqref{le})}\\
&= LRL - R(\alpha L + LR - RL) \quad\text{(by \eqref{le})}\\
&= LRL - RLR - (R\alpha - R^2)L\\
&= LRL - RLR \quad\text{(by \eqref{Rsquare} and \eqref{lr})}.
\end{split}
\end{equation}
Comparing \eqref{l3} and \eqref{l3'}, we obtain
\begin{equation}
\label{lrl}
LRL = \alpha LR.
\end{equation}
Using \eqref{lrl} in \eqref{l3'}, we obtain
\begin{equation}
\label{l3''}
L^3 - \alpha L^2 = \alpha LR - RLR.
\end{equation}
Finally, we have
\begin{equation}
\label{l2'}
\begin{split}
(L^3 - \alpha L^2)L &= \alpha LRL - RLRL \quad\text{(by \eqref{l3''})}\\
&= \alpha^2 LR - \alpha RLR \quad\text{(by \eqref{lrl} and \eqref{lr})}\\
&= \alpha (\alpha LR - RLR)\\
&= \alpha(L^3 - \alpha L^2) \quad\text{(by \eqref{l3''})}.
\end{split}
\end{equation}
Comparing \eqref{l2} and \eqref{l2'}, we conclude that $(L^2 - \alpha L)^2 = 0$.
\end{proof}

The next result shows that, although $L_e$ is not $\alpha$-idempotent, there is an operator of degree $3$ in $L_e$ that is $\alpha^4$-idempotent.  This is the Hom-version of an observation in \cite{albert3} (section 6).

\begin{corollary}
\label{cor1:L}
Let $(A,\mu,\alpha)$ be a multiplicative right Hom-alternative algebra, $e \in A$ be an idempotent, and $L = L_e$.  Then the linear operator
\begin{equation}
\label{T}
T = 3\alpha^2L^2 - 2\alpha L^3
\end{equation}
satisfies
\begin{equation}
\label{Tn}
T^{n+1} = \alpha^{4n}T
\end{equation}
for all $n \geq 0$.  In particular, $T$ is $\alpha^4$-idempotent, i.e.,
\[
T^2 = \alpha^4 T.
\]
\end{corollary}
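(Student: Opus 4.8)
The plan is to deduce \eqref{Tn} from Theorem \ref{prop:L} alone, working throughout inside the commutative subalgebra of $\Hom(A,A)$ generated by $\alpha$ and $L = L_e$; that these two operators commute is precisely \eqref{lr}, and no other commutator relation is needed since $T$ involves no right multiplication operator. First I would rewrite the conclusion $(L_e^2 - \alpha L_e)^2 = 0$ of Theorem \ref{prop:L}: expanding the square and using $\alpha L = L\alpha$ turns it into $L^4 - 2\alpha L^3 + \alpha^2 L^2 = 0$, that is, the single relation $L^4 = 2\alpha L^3 - \alpha^2 L^2$. Everything below is a formal consequence of this one identity together with $[\alpha,L]=0$.

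Next, multiplying this relation by $L$ and then by $L^2$ and simplifying, I would express $L^5$ and $L^6$ as $\alpha$-power combinations of $L^2$ and $L^3$. Substituting these into
\[
T^2 = \bigl(3\alpha^2 L^2 - 2\alpha L^3\bigr)^2 = 9\alpha^4 L^4 - 12\alpha^3 L^5 + 4\alpha^2 L^6
\]
and collecting the coefficients of $\alpha^5 L^3$ and $\alpha^6 L^2$ yields $T^2 = 3\alpha^6 L^2 - 2\alpha^5 L^3 = \alpha^4 T$. This coefficient bookkeeping is the only computational point in the argument, and it can be bypassed entirely by the observation that $T = \alpha L^2(3\alpha - 2L)$, so that
\[
T^2 - \alpha^4 T = \alpha^2 L^2 (3\alpha - 2L)\bigl(3\alpha L^2 - 2L^3 - \alpha^3\bigr);
\]
the polynomial $(3\alpha - 2L)(3\alpha L^2 - 2L^3 - \alpha^3)$ and its $L$-derivative both vanish at $L = \alpha$, hence it is divisible by $(L-\alpha)^2$, so $T^2 - \alpha^4 T$ lies in the ideal generated by $L^2(L-\alpha)^2 = (L^2 - \alpha L)^2$, which is zero by Theorem \ref{prop:L}.

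Finally, the general statement \eqref{Tn} follows from $T^2 = \alpha^4 T$ by a trivial induction on $n$: the case $n=0$ is the tautology $T = T$, and since $\alpha$ commutes with $L$ it commutes with $T$, so assuming $T^{n} = \alpha^{4(n-1)}T$ we get
\[
T^{n+1} = T^{n}\,T = \alpha^{4(n-1)}T\cdot T = \alpha^{4(n-1)}T^2 = \alpha^{4n}T .
\]
Specializing to $n = 1$ records that $T$ is $\alpha^4$-idempotent. I do not anticipate any real obstacle: the argument is entirely formal once Theorem \ref{prop:L} and the relation $[\alpha,L_e]=0$ are available, and the only place one could slip is the arithmetic of the coefficients in $T^2$, which is why I would corroborate it with the factorization check above.
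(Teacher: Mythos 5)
Your proposal is correct and follows essentially the same route as the paper: both reduce everything to the single identity $T^2 = \alpha^4 T$, derived from $(L^2-\alpha L)^2 = 0$ (Theorem \ref{prop:L}) together with $[\alpha,L_e]=0$, and then conclude \eqref{Tn} by the same trivial induction. Your factorization check is in fact the paper's computation in disguise, since $-\alpha^2(3\alpha-2L)(2L+\alpha) = 4\alpha^2L^2 - 4\alpha^3L - 3\alpha^4$ is exactly the cofactor of $(L^2-\alpha L)^2$ used in \eqref{T2}.
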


\begin{proof}
The condition \eqref{Tn} is trivially true when $n = 0$.  When $n=1$, we compute using \eqref{lr} as follows:
\begin{equation}
\label{T2}
\begin{split}
T^2 &= 4\alpha^2 L^6 - 12 \alpha^3 L^5 + 9\alpha^4 L^4\\
&= (4\alpha^2 L^2 - 4\alpha^3 L - 3\alpha^4)(L^4 - 2\alpha L^3 + \alpha^2 L^2) + \alpha^4 T\\
&= (4\alpha^2 L^2 - 4\alpha^3 L - 3\alpha^4)(L^2 - \alpha L)^2 + \alpha^4 T\\
&= \alpha^4 T.
\end{split}
\end{equation}
The last equality holds because $(L^2 - \alpha L)^2 = 0$ by Theorem \ref{prop:L}.  Inductively, suppose \eqref{Tn} is true for a particular $n$.  Then we have:
\[
\begin{split}
T^{n+2} &= T^{n+1}T\\
&= \alpha^{4n}TT\quad\text{(by induction hypothesis)}\\
&= \alpha^{4n}\alpha^4 T\quad\text{(by \eqref{T2})}\\
&= \alpha^{4(n+1)}T.
\end{split}
\]
This finishes the induction and proves \eqref{Tn} for all $n$.
\end{proof}

In the context of Lemma \ref{lem1:idempotent}, the operators $L_e$ and $R_e$ do not commute, as can be seen from \eqref{le}.  Our next result says that the $\alpha^4$-idempotent operator $T$ \eqref{T} does commute with $R_e$.  This result is the Hom-version of \cite{albert3} (Lemma 7).

\begin{corollary}
\label{prop:T}
Let $(A,\mu,\alpha)$ be a multiplicative right Hom-alternative algebra, $e \in A$ be an idempotent, $L = L_e$, and $R = R_e$. Then
\[
[T, R] = 0,
\]
where $T = 3\alpha^2L^2 - 2\alpha L^3$ is the operator in Corollary \ref{cor1:L}.
\end{corollary}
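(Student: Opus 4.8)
The plan is to show $[T,R]=0$ by reducing everything, via the multiplicativity relations \eqref{lr}, to showing that $[L^2,R]$ and $[L^3,R]$ differ by something that drops out after multiplying by the appropriate powers of $\alpha$. Since $T = 3\alpha^2 L^2 - 2\alpha L^3$ and $\alpha$ commutes with both $L$ and $R$ by \eqref{lr}, we have $[T,R] = 3\alpha^2[L^2,R] - 2\alpha[L^3,R]$, so it suffices to compute these two commutators.

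First I would compute $[L^2,R] = L^2 R - RL^2$. Using \eqref{le} in the form $L^2 = \alpha L + LR - RL$ (equivalently $L^2 - \alpha L = LR - RL$), together with \eqref{Rsquare} ($R^2 = \alpha R$) and \eqref{lr}, I expect to express $L^2 R$ and $RL^2$ in terms of the monomials $\alpha LR$, $RLR$, $LRL$, $\alpha RL$, and then invoke the key identity \eqref{lrl}, namely $LRL = \alpha LR$, which was established inside the proof of Theorem \ref{prop:L}. That identity is the crucial ingredient; it collapses the length-three words in $L$ and $R$ down to manageable form. I anticipate getting something like $[L^2,R] = \alpha L R - RLR + (\text{lower-order terms involving } \alpha^2 L, \alpha^2 R)$ — in any case a clean expression.

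Next I would compute $[L^3,R]$ by writing $L^3 = L \cdot L^2$ and using the already-derived formula $L^3 - \alpha L^2 = \alpha LR - RLR$ from \eqref{l3''} in the proof of Theorem \ref{prop:L}. So $[L^3,R] = [\alpha L^2 + \alpha LR - RLR,\, R] = \alpha[L^2,R] + \alpha[LR,R] - [RLR,R]$, and each of these pieces is controlled: $[LR,R] = LR^2 - RLR = \alpha LR - RLR$ by \eqref{Rsquare}, and $[RLR,R] = RLR^2 - R^2LR = \alpha RLR - \alpha RLR = 0$ again by \eqref{Rsquare}. Thus $[L^3,R] = \alpha[L^2,R] + \alpha(\alpha LR - RLR)$, i.e.\ $[L^3,R] = \alpha[L^2,R] + \alpha(L^3 - \alpha L^2)$ by \eqref{l3''}.

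Finally I would assemble: $[T,R] = 3\alpha^2[L^2,R] - 2\alpha[L^3,R] = 3\alpha^2[L^2,R] - 2\alpha\bigl(\alpha[L^2,R] + \alpha(L^3-\alpha L^2)\bigr) = \alpha^2[L^2,R] - 2\alpha^2(L^3 - \alpha L^2)$. It then remains to check that $[L^2,R] = 2(L^3 - \alpha L^2)$, or rather that $\alpha^2[L^2,R] = 2\alpha^2(L^3-\alpha L^2)$, which should follow directly from the formula for $[L^2,R]$ obtained in the first step combined with \eqref{l3''} and \eqref{lrl}. The main obstacle will be keeping the bookkeeping of the non-commuting words in $L$ and $R$ straight while repeatedly substituting \eqref{le}, \eqref{Rsquare}, and \eqref{lrl}; once $[L^2,R]$ is pinned down explicitly, the rest is forced. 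No new identity beyond those already in the proof of Theorem \ref{prop:L} should be needed.
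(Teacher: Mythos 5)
Your proposal is correct, and every step you actually commit to checks out: using $[\alpha,L]=[\alpha,R]=0$ from \eqref{lr} together with \eqref{le} and \eqref{Rsquare} one gets $L^2R=(\alpha L+LR-RL)R=2\alpha LR-RLR$ and $RL^2=RLR$, hence $[L^2,R]=2(\alpha LR-RLR)=2(L^3-\alpha L^2)$ by \eqref{l3''} (note the factor $2$: the correct expression is $2(\alpha LR-RLR)$, not $\alpha LR-RLR$ plus lower-order terms as you anticipated, but this is exactly the target you state at the end); your computation $[L^3,R]=\alpha[L^2,R]+\alpha(\alpha LR-RLR)=3\alpha(L^3-\alpha L^2)$ is also right, and then $[T,R]=3\alpha^2[L^2,R]-2\alpha[L^3,R]=0$ with nothing left over. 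Your route is genuinely somewhat different from the paper's, though it shares the reduction to $[L^2,R]$ and $[L^3,R]$. The paper proves, by induction on $k$ using only \eqref{le} and \eqref{lr}, the general identity $[L^k,R]=k(L^{k+1}-\alpha L^k)$ (its \eqref{lk}), obtains $[T,R]=6\alpha^2(L^3-\alpha L^2)-6\alpha(L^4-\alpha L^3)=-6\alpha(L^2-\alpha L)^2$, and then invokes the statement of Theorem \ref{prop:L}. You instead reuse the intermediate identities \eqref{lrl} and \eqref{l3''} from inside the proof of Theorem \ref{prop:L} to express both commutators through the mixed word $\alpha LR-RLR$, and the cancellation is then automatic, with no explicit appeal to $(L^2-\alpha L)^2=0$; the nilpotency is still implicitly present, since your value $[L^3,R]=3\alpha(L^3-\alpha L^2)$ and the paper's $[L^3,R]=3(L^4-\alpha L^3)$ differ precisely by $3(L^2-\alpha L)^2$. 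What the paper's route buys is a reusable formula \eqref{lk} valid for all $k$ and a corollary that depends only on the \emph{statement} of Theorem \ref{prop:L}; what yours buys is a shorter, purely computational verification, at the cost of leaning on equations internal to that theorem's proof rather than on quotable results.
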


\begin{proof}
First we claim that
\begin{equation}
\label{lk}
k(L^{k+1} - \alpha L^k) = [L^k, R]
\end{equation}
for all $k \geq 1$.  The case $k=1$ is true by \eqref{le}.  Inductively, suppose \eqref{lk} is true for a particular $k$. Then we have:
\[
\begin{split}
k(L^{k+2} - \alpha L^{k+1}) &= kL(L^{k+1} - \alpha L^k)\quad\text{(by \eqref{lr})}\\
&= L[L^k, R] \quad\text{(by induction hypothesis)}\\
&= L^{k+1}R - LRL^k\\
&= L^{k+1}R - (L^2 - \alpha L + RL)L^k \quad\text{(by \eqref{le})}\\
&= [L^{k+1}, R] - (L^{k+2} - \alpha L^{k+1}).
\end{split}
\]
This finishes the induction step and proves \eqref{lk} for all $k$.

Using \eqref{lr} and \eqref{lk}, we compute as follows:
\[
\begin{split}
[T, R] &= (3\alpha^2L^2 - 2\alpha L^3)R - R(3\alpha^2L^2 - 2\alpha L^3)\\
&= 3\alpha^2[L^2, R] - 2\alpha[L^3, R]\\
&= 3\alpha^2(2)(L^3 - \alpha L^2) - 2\alpha(3)(L^4 - \alpha L^3)\\
&= -6\alpha(L^2 - \alpha L)^2.
\end{split}
\]
Since $(L^2 - \alpha L)^2 = 0$ by Theorem \ref{prop:L}, we conclude that $[T, R] = 0$.
\end{proof}

\section{Identities in right Hom-alternative algebras}
\label{sec:identity}

The purpose of this section is to prove the following Hom-type generalizations of some well-known and frequently-used identities in right alternative algebras from \cite{kleinfeld}.

\begin{theorem}
\label{thm:id}
Let $(A,\mu,\alpha)$ be a multiplicative right Hom-alternative algebra.  Then the following identities hold for all $w,x,y,z \in A$.
\begin{subequations}
\begin{align}
as(\alpha(x),\alpha(y),yz) &= as(x,y,z)\alpha^2(y). \label{xyyz}\\
as(\alpha(x),\alpha(w),yz) + as(\alpha(x),\alpha(y),wz) &= as(x,w,z)\alpha^2(y) + as(x,y,z)\alpha^2(w).\label{g}\\
as(wx,\alpha(y),\alpha(z)) + as(\alpha(w),\alpha(x),[y,z]) &= \alpha^2(w)as(x,y,z) + as(w,y,z)\alpha^2(x).\label{h}\\
as(\alpha(x),y^2,\alpha(z)) &= as(\alpha(x),\alpha(y),yz+zy).\label{xyyz2}\\
((xy)\alpha(z))\alpha^2(y) &= \alpha^2(x)((yz)\alpha(y)). \label{moufang}\\
(as(x,y,z)\alpha^2(y))\alpha^3(z) &= \alpha(as(x,y,z)\alpha(zy)).\label{xyzyz}
\end{align}
\end{subequations}
\end{theorem}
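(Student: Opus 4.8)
The plan is to establish the six identities in sequence, using the linearized right Hom-alternative identity \eqref{righthomalt'} (equivalently \eqref{righthomalt}) together with multiplicativity as the main tools, and bootstrapping later identities from earlier ones. First I would prove \eqref{xyyz}. The natural approach is to expand both sides using \eqref{homass} and reduce everything to products of the form $(\text{word})\alpha^k(\text{letter})$, then repeatedly apply the right Hom-alternative identity in its operator form \eqref{righthomalt2}, $R_xR_{\alpha(x)} = \alpha R_{xx}$, and its linearization \eqref{righthomalt3}. Concretely, one rewrites $as(\alpha(x),\alpha(y),yz)$ as $(\alpha(x)\alpha(y))\alpha(yz) - \alpha^2(x)(\alpha(y)(yz))$; the first term is handled by multiplicativity ($\alpha(yz)=\alpha(y)\alpha(z)$ is false in general, so instead $\alpha(yz)$ stays, and one uses $(\alpha(x)\alpha(y)) = \alpha(xy)$), and one pushes towards a Teichm\"uller-type relation. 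I expect this first identity to require the most care, since it is the foundational one; a clean route is to verify it is exactly the image under $\mu$ of a suitable instance of the linearized identity, in the spirit of how \eqref{lpower}--\eqref{rpower} were manipulated in the proof of Theorem \ref{thm:power}.

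Next, \eqref{g} is the full linearization of \eqref{xyyz} in the variable $y$: replace $y$ by $y+w$ in \eqref{xyyz}, expand both sides (using bilinearity of $\mu$ and linearity of $as$ and $\alpha$), subtract the $y$-only and $w$-only instances, and collect the cross terms; this is routine once \eqref{xyyz} is in hand. Then \eqref{h} should follow from \eqref{g} by a symmetrization/antisymmetrization argument: combining \eqref{g} with the skew-symmetry \eqref{righthomalt} in the last two slots of $as$ (applied to the terms involving $yz$ versus $zy$), the commutator $[y,z] = yz - zy$ emerges on the left, and the right side reorganizes into the stated form. I would write $as(\alpha(w),\alpha(x),[y,z]) = as(\alpha(w),\alpha(x),yz) - as(\alpha(w),\alpha(x),zy)$ and use \eqref{g} (with roles of letters suitably permuted) twice.

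For \eqref{xyyz2}: this is just \eqref{xyyz} summed with itself after swapping $y$ and a formal partner, i.e. set $w = y$ in \eqref{g}? No---rather, \eqref{xyyz2} follows by taking \eqref{xyyz} and adding the instance obtained by the substitution that turns $yz$ into $zy$; more cleanly, $as(\alpha(x),\alpha(y),yz) + as(\alpha(x),\alpha(y),zy) = as(\alpha(x),\alpha(y),yz+zy)$ on the right, while on the left $as(\alpha(x),\alpha(y),zy)$ can be rewritten via \eqref{righthomalt} and then \eqref{xyyz}-type reasoning so that the left side collapses to $as(\alpha(x),y^2,\alpha(z))$; the appearance of $y^2$ comes precisely from the linearized identity \eqref{righthomalt'} applied with two of the arguments equal to $y$. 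Finally, \eqref{moufang} is the "zero-associator" shadow of \eqref{xyyz}: by right Hom-alternativity $as(x,y,z)$ is already skew in $y,z$, and setting things up so that the associator terms cancel leaves exactly $((xy)\alpha(z))\alpha^2(y) = \alpha^2(x)((yz)\alpha(y))$; concretely I would expand both sides of \eqref{xyyz} fully and rearrange, noting that the two "$\alpha^2(x)(\cdots)$" and "$((\cdots))\alpha^2(y)$" terms that are not of Moufang shape cancel against each other. And \eqref{xyzyz} follows by multiplying \eqref{xyyz} on the right by $\alpha^3(z)$ (using multiplicativity to move $\alpha$ through products) and then applying the right Hom-alternative identity \eqref{righthomalt2}/\eqref{righthomalt3} once more to convert $(\,\cdot\,\alpha^2(y))\alpha^3(z)$ into $\alpha(\,\cdot\,\alpha(zy))$, since $R_{\alpha^2(y)}R_{\alpha^3(z)} + R_{\alpha^2(z)}R_{\alpha^3(y)}$ relates to $\alpha R_{\alpha^2(yz+zy)}$ by the linearized identity applied to the pair $(y,z)$; here the term $as(x,y,z)$ sitting in front plays no role beyond being a fixed element, so this last step is purely an application of Lemma \ref{lem:Rx} to that element.

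The main obstacle will be \eqref{xyyz}: it is the only identity not deducible from a previous one by formal linearization or by specializing associators to zero, so it must be proved directly from \eqref{righthomalt'} and multiplicativity, and getting the bookkeeping of the twisting powers $\alpha^k$ correct (so that every term genuinely matches) is the delicate part. Once \eqref{xyyz} is secured, the remaining five identities are successive consequences obtained by polarization, by using the skew-symmetry \eqref{righthomalt}, and by right-multiplying and reapplying Lemma \ref{lem:Rx}, all of which are mechanical.
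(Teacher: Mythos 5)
Your plan correctly identifies \eqref{g} as the linearization of \eqref{xyyz}, and your sketches for \eqref{xyyz2} and \eqref{moufang} are essentially workable (in fact \eqref{xyyz2} follows from \eqref{g} with $y,z$ permuted and $w=y$, together with \eqref{righthomalt} and \eqref{xyyz}, close to what the paper does). But the proposal has a genuine missing idea at its foundation: you never actually prove \eqref{xyyz}, and the route you suggest --- exhibiting it as ``the image under $\mu$ of a suitable instance of the linearized identity'' --- will not work. Already in the classical case $\alpha = Id$, the identity $(x,y,yz)=(x,y,z)y$ is not a consequence of the linearized right alternative law applied in one slot; it requires combining associators of products via the Teichm\"{u}ller identity. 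The paper's engine is precisely the Hom-Teichm\"{u}ller identity $f(w,x,y,z)=0$ of Lemma \ref{lem:teichmuller}, and \eqref{xyyz} is extracted from the combination $f(x,y,y,z)-f(x,z,y,y)+f(x,y,z,y)$ (using characteristic $0$ to divide by $2$). Your proposal contains no substitute for this tool, and you yourself flag \eqref{xyyz} as the unproved ``main obstacle,'' so the core of the theorem is not established.

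Two further steps fail as described. For \eqref{h}, skew-symmetry plus \eqref{g} cannot suffice: \eqref{h} involves $as(wx,\alpha(y),\alpha(z))$, with a product in the \emph{first} slot, and the left-multiplied associator $\alpha^2(w)as(x,y,z)$, and no instance of \eqref{g} or of \eqref{righthomalt} ever produces terms of these shapes; the paper again needs Teichm\"{u}ller, via $f(w,x,y,z)-g(w,z,x,y)=0$. For \eqref{xyzyz}, applying the linearized identity \eqref{righthomalt'} (Lemma \ref{lem:Rx}) to the fixed element $a=as(x,y,z)$ with the pair $(\alpha^2(y),\alpha^2(z))$ yields only
\begin{equation*}
\bigl(as(x,y,z)\alpha^2(y)\bigr)\alpha^3(z) + \bigl(as(x,y,z)\alpha^2(z)\bigr)\alpha^3(y) = \alpha\bigl(as(x,y,z)\alpha(yz+zy)\bigr),
\end{equation*}
which is the sum of \eqref{xyzyz} and its $y\leftrightarrow z$ transform; equivalently it says the defect is symmetric in $y,z$, which is strictly weaker than the defect vanishing. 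The paper closes this gap with the auxiliary identity \eqref{xyyzz} and an application of \eqref{g'} with composite arguments, namely $g(x',y',yz,z')=0$, which is not ``purely an application of Lemma \ref{lem:Rx}.'' So as written the proposal proves neither the base identity \eqref{xyyz} nor \eqref{h} nor \eqref{xyzyz}; introducing the Hom-Teichm\"{u}ller identity is the missing ingredient that makes all three go through.
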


Setting $\alpha = Id$ in Theorem \ref{thm:id}, we obtain the corresponding identities in right alternative algebras, all of which can be found in \cite{kleinfeld}.  In \eqref{h} the bracket $[,]$ is the commutator bracket of $\mu$, i.e., $[,] = \mu - \muop$.  The identity \eqref{moufang} is one of the Hom-Moufang identities in \cite{yau4}, in which \eqref{moufang} was established with the additional assumption of left Hom-alternativity.

To simplify the presentation, we adopt the following notations.

\begin{definition}
\label{def:f}
Let $(A,\mu,\alpha)$ be a Hom-algebra, and let $w,x,y,z$ be elements in $A$.  Define
\[
(x,y,z) = as_A(x,y,z),\quad
x' = \alpha(x), \quad x'' = \alpha^2(x),\quad x''' = \alpha^3(x).
\]
Define the function $f \colon A^{\otimes 4} \to A$ by
\begin{equation}
\label{f}
\begin{split}
f(w,x,y,z) &= (wx,y',z') - (w',xy,z') + (w',x',yz)\\
&\relphantom{} - w''(x,y,z) - (w,x,y)z''
\end{split}
\end{equation}
for $w,x,y,z \in A$.
\end{definition}

With these notations, if $A$ is multiplicative, then
\[
(xy)' = x'y',\quad (x,y,z)' = (x',y',z')
\]
The right Hom-alternative identity \eqref{rhomalt} and its linearized form \eqref{righthomalt} now say
\[
(x,y,y) = 0 \quad\text{and}\quad (x,y,z) = - (x,z,y),
\]
respectively.

We need the following preliminary result.

\begin{lemma}
\label{lem:teichmuller}
Let $(A,\mu,\alpha)$ be a multiplicative Hom-algebra.  Then
\begin{equation}
\label{teichmuller}
f(w,x,y,z) = 0
\end{equation}
for all $w,x,y,z \in A$.
\end{lemma}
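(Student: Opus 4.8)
The plan is to prove \eqref{teichmuller} by a direct expansion that uses only the multiplicativity of $\alpha$; neither the right Hom-alternative identity \eqref{rhomalt} nor its linearized form \eqref{righthomalt} is needed. Indeed the statement is purely the ``Hom'' version of the classical Teichm\"uller identity relating the associators $(wx,y,z)$, $(w,xy,z)$, $(w,x,yz)$, $w(x,y,z)$ and $(w,x,y)z$; the twisting maps in \eqref{f} are inserted precisely so that, after expanding each of the five Hom-associators, every resulting monomial is an honest product of the four letters $w,x,y,z$ with a matching number of $\alpha$'s applied to each factor.

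The key step is to expand each of the five terms of $f$ in \eqref{f} by the elementwise formula $as_A(a,b,c) = (ab)\alpha(c) - \alpha(a)(bc)$ from \eqref{homass}, and then to rewrite every $\alpha$ of a product via $(ab)' = a'b'$, which holds by multiplicativity. This produces
\[
\begin{aligned}
(wx,y',z') &= \bigl((wx)y'\bigr)z'' - (w'x')(y'z'),\\
(w',xy,z') &= \bigl(w'(xy)\bigr)z'' - w''\bigl((xy)z'\bigr),\\
(w',x',yz) &= (w'x')(y'z') - w''\bigl(x'(yz)\bigr),\\
w''(x,y,z) &= w''\bigl((xy)z'\bigr) - w''\bigl(x'(yz)\bigr),\\
(w,x,y)z'' &= \bigl((wx)y'\bigr)z'' - \bigl(w'(xy)\bigr)z''.
\end{aligned}
\]
Substituting these into $f(w,x,y,z) = (wx,y',z') - (w',xy,z') + (w',x',yz) - w''(x,y,z) - (w,x,y)z''$, one sees that the ten monomials on the right are drawn from the five expressions $\bigl((wx)y'\bigr)z''$, $(w'x')(y'z')$, $\bigl(w'(xy)\bigr)z''$, $w''\bigl((xy)z'\bigr)$ and $w''\bigl(x'(yz)\bigr)$, and that each of these occurs exactly twice with opposite signs. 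Hence the sum vanishes, which is \eqref{teichmuller}.

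The only real difficulty here is bookkeeping. Because $A$ is merely Hom-associative, the five monomials above are genuinely distinct expressions, so the cancellation is a literal matching of identical terms rather than a consequence of any associativity; one must keep careful track of how many times $\alpha$ has been applied to each factor. Multiplicativity enters exactly at this point: it is what makes, for instance, the summand $(w'x')(y'z')$ coming from $(wx,y',z')$ equal to the summand $(w'x')(y'z')$ coming from $(w',x',yz)$ — without it these would read $\alpha(wx)\,(y'z')$ versus $(w'x')\,\alpha(yz)$ and would fail to cancel. I would therefore present the five expansions above and the short list of five monomials, and leave the one-line verification that each monomial cancels to the reader.
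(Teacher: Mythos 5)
Your proof is correct and is essentially the paper's own argument: the paper's proof just says to expand the five Hom-associators and observe that the ten resulting terms cancel, which is exactly what your explicit expansions and pairing accomplish (and your remark that multiplicativity, not right Hom-alternativity, is what makes the matching terms literally equal is accurate). The only nitpick is the phrase ``merely Hom-associative,'' which should read ``merely a multiplicative Hom-algebra,'' since no Hom-associativity is assumed or used.
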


\begin{proof}
Simply expand the five Hom-associators in $f$ \eqref{f}, and observe that the resulting ten terms add to $0$.
\end{proof}

The $\alpha = Id$ special case of \eqref{teichmuller} is known as the Teichm\"{u}ller identity, which holds in any algebra.  We refer to \eqref{teichmuller} as the \textbf{Hom-Teichm\"{u}ller identity}.

In the rest of this section, $(A,\mu,\alpha)$ denotes a multiplicative right Hom-alternative algebra, and $w,x,y,z$ denote arbitrary elements in $A$.  We now prove the identities in Theorem \ref{thm:id}.

\begin{proof}[Proof of \eqref{xyyz}]
We need to prove
\begin{equation}
\label{xyyz'}
(x',y',yz) = (x,y,z)y''.
\end{equation}
Using \eqref{rhomalt}, \eqref{righthomalt}, and the Hom-Teichm\"{u}ller identity, we have:
\[
\begin{split}
0 &= f(x,y,y,z) - f(x,z,y,y) + f(x,y,z,y)\\
&= \underbrace{(xy,y',z')}_{(c)} - \underbrace{(x',yy,z')}_{(d)} + \underbrace{(x',y',yz)}_{(a)} - \underbrace{x''(y,y,z)}_{(e)} - (x,y,y)z''\\
&\relphantom{} - (xz,y',y') + \underbrace{(x',zy,y')}_{(i)} - \underbrace{(x',z',yy)}_{-(d)} + x''(z,y,y) + \underbrace{(x,z,y)y''}_{-(b)}\\
&\relphantom{} + \underbrace{(xy,z',y')}_{-(c)} - \underbrace{(x',yz,y')}_{-(a)} + \underbrace{(x',y',zy)}_{-(i)} - \underbrace{x''(y,z,y)}_{-(e)} - \underbrace{(x,y,z)y''}_{(b)}\\
&= 2\underbrace{(x',y',yz)}_{(a)} - 2\underbrace{(x,y,z)y''}_{(b)}
\end{split}
\]
Dividing by $2$ in the above computation yields \eqref{xyyz'}.
\end{proof}

\begin{proof}[Proof of \eqref{g}]
We need to prove
\begin{equation}
\label{g'}
g(x,w,y,z) \defn (x',w',yz) + (x',y',wz) - (x,w,z)y'' - (x,y,z)w'' = 0.
\end{equation}
Linearize the identity \eqref{xyyz'} by replacing $y$ by $y+w$. The result is exactly \eqref{g'}.
\end{proof}

\begin{proof}[Proof of \eqref{h}]
We need to prove
\begin{equation}
\label{h'}
h(w,x,y,z) \defn (wx,y',z') + (w',x',[y,z]) - w''(x,y,z) - (w,y,z)x'' = 0.
\end{equation}
Using \eqref{righthomalt}, the Hom-Teichm\"{u}ller identity, and \eqref{g'}, we have:
\[
\begin{split}
0 &= f(w,x,y,z) - g(w,z,x,y)\\
&= (wx,y',z') - \underbrace{(w',xy,z')}_{(a)} + (w',x',yz) - w''(x,y,z) - (w,x,y)z''\\
&\relphantom{} - \underbrace{(w',z',xy)}_{-(a)} - (w',x',zy) + (w,z,y)x'' + (w,x,y)z''\\
&= (wx,y',z') + (w',x',yz) - w''(x,y,z) - (w',x',zy) - (w,y,z)x''\\
&= h(w,x,y,z).
\end{split}
\]
This proves \eqref{h'}.
\end{proof}

\begin{proof}[Proof of \eqref{xyyz2}]
By \eqref{righthomalt} the identity \eqref{xyyz2} is equivalent to
\begin{equation}
\label{xyyz2'}
(x',z',y^2) = (x',yz+zy,y').
\end{equation}
Using \eqref{rhomalt}, \eqref{righthomalt}, and the Hom-Teichm\"{u}ller identity, we have:
\[
\begin{split}
0 &= f(x,z,y,y)\\
&= (xz,y',y') - (x',zy,y') + (x',z',yy) - x''(z,y,y) - (x,z,y)y''\\
&= - (x',zy,y') + (x',z',y^2) + (x,y,z)y''\\
&= - (x',zy,y') + (x',z',y^2) + (x',y',yz)\quad\text{(by \eqref{xyyz'})}\\
&= - (x',zy,y') + (x',z',y^2) - (x',yz,y').
\end{split}
\]
This proves \eqref{xyyz2'}.
\end{proof}

\begin{proof}[Proof of \eqref{moufang}]
We need to prove
\begin{equation}
\label{moufang'}
((xy)z')y'' = x''((yz)y').
\end{equation}
We compute as follows:
\[
\begin{split}
((xy)z')y'' &= (x,y,z)y'' + (x'(yz))y'' \quad\text{(by \eqref{homassociator})}\\
&= (x',y',yz) + (x'(yz))y'' \quad\text{(by \eqref{xyyz'})}\\
&= - (x',yz,y') + (x'(yz))y''\quad\text{(by \eqref{righthomalt})}\\
&= - (x'(yz))y'' + x''((yz)y') + (x'(yz))y''\quad\text{(by \eqref{homassociator})}\\
&= x''((yz)y').
\end{split}
\]
This proves the Hom-Moufang identity \eqref{moufang}.
\end{proof}

\begin{proof}[Proof of \eqref{xyzyz}]
By interchanging $y$ and $z$, the desired identity \eqref{xyzyz} is equivalent to
\[
((x,z,y)z'')y''' = (x,z,y)'(yz)''.
\]
Using the linearized right Hom-alternative identity \eqref{righthomalt} in the previous line, it follows that \eqref{xyzyz} is equivalent to
\begin{equation}
\label{xyzyz'}
((x,y,z)z'')y''' = (x,y,z)'(yz)''.
\end{equation}
To prove \eqref{xyzyz'}, first observe that:
\begin{equation}
\label{xyyzz}
\begin{split}
(x'',y'',y'(zz)) &= (x',y',zz)y''' \quad\text{(by \eqref{xyyz'})}\\
&= (x',yz,z')y''' + (x',zy,z')y''' \quad\text{(by \eqref{xyyz2'})}\\
&= (x',yz,z')y''' - (x',z',zy)y''' \quad\text{(by \eqref{righthomalt})}\\
&= (x',yz,z')y''' - ((x,z,y)z'')y''' \quad\text{(by \eqref{xyyz'})}\\
&= (x',yz,z')y''' + ((x,y,z)z'')y''' \quad\text{(by \eqref{righthomalt})}.\\
\end{split}
\end{equation}
Now using \eqref{g'} and \eqref{rhomalt} we have:
\[
\begin{split}
0 &= g(x',y',yz,z')\\
&= (x'',y'',(yz)z') + (x'',(yz)',y'z') - (x',y',z')(yz)'' - (x',yz,z')y'''\\
&= (x'',y'',y'(zz)) + (x'',y'z',y'z') - (x,y,z)'(yz)'' - (x',yz,z')y'''\\
&= (x'',y'',y'(zz)) - (x,y,z)'(yz)'' - (x',yz,z')y'''\\
&= ((x,y,z)z'')y''' - (x,y,z)'(yz)'' \quad\text{(by \eqref{xyyzz})}.
\end{split}
\]
This proves the identity \eqref{xyzyz'}.
\end{proof}



\begin{thebibliography}{AA}
\bibitem{albert1}
A.A. Albert, On the power-associativity of rings, Summa Brasil. Math. 2 (1948) 21-32.

\bibitem{albert2}
A.A. Albert, Power-associative rings, Trans. Amer. Math. Soc. 64 (1948) 552-593.

\bibitem{albert3}
A.A. Albert, On right alternative algebras, Ann. Math. 50 (1949) 318-328.

\bibitem{bk}
R.H. Bruck and E. Kleinfeld, The structure of alternative division rings, Proc. Amer. Math. Soc. 2 (1951) 878-890.

\bibitem{hls}
J.T. Hartwig, D. Larsson, and S.D. Silvestrov, Deformations of Lie algebras using $\sigma$-derivations, J. Algebra 295 (2006) 314-361.

\bibitem{kleinfeld}
E. Kleinfeld, Right alternative rings, Proc. Amer. Math. Soc. 4 (1953) 939-944.

\bibitem{mak}
A. Makhlouf, Hom-alternative algebras and Hom-Jordan algebras, Int. Elect. J. Alg. 8 (2010) 177-190.

\bibitem{ms}
A. Makhlouf and S. Silvestrov, Hom-algebra structures, J. Gen. Lie Theory Appl. 2 (2008) 51-64.

\bibitem{ms2}
A. Makhlouf and S. Silvestrov, Hom-algebras and Hom-coalgebras, J. Algebra Appl. 9 (2010) 1-37.

\bibitem{maltsev}
A.I. Mal'tsev, Analytic loops, Mat. Sb. 36 (1955) 569-576.

\bibitem{schafer}
R.D. Schafer, An introduction to nonassociative algebras, Dover Pub., New York, 1995.

\bibitem{yau0}
D. Yau, Enveloping algebras of Hom-Lie algebras, J. Gen. Lie Theory Appl. 2 (2008) 95-108.

\bibitem{yau1}
D. Yau, Hom-algebras and homology, J. Lie Theory 19 (2009) 409-421.

\bibitem{yau2}
D. Yau, The Hom-Yang-Baxter equation, Hom-Lie algebras, and quasi-triangular bialgebras, J. Phys. A 42 (2009) 165202 (12pp).

\bibitem{yau3}
D. Yau, Hom-bialgebras and comodule Hom-algebras, Int. Elect. J. Alg. 8 (2010) 45-64.

\bibitem{yau4}
D. Yau, Hom-Maltsev, Hom-alternative, and Hom-Jordan algebras, arXiv:1002.3944.

\bibitem{yau15}
D. Yau, Hom-power associative algebras, arXiv:1007.4118.
\end{thebibliography}
\end{document}